\documentclass[a4paper, reqno, 12pt]{amsart}

\usepackage{geometry}       

\usepackage{float}
\usepackage{bm}
\usepackage{fullpage,xcolor}
\usepackage[mathscr]{euscript}
\usepackage[all]{xy}
\usepackage{epsfig}
\usepackage{amsfonts}
\usepackage{mathptmx}

\usepackage{graphicx}
\usepackage{amssymb} 
\usepackage{amsmath}
\usepackage{amsthm}
\usepackage{mathrsfs}
\usepackage{epstopdf}
\usepackage{url}
\usepackage[msc-links,alphabetic]{amsrefs}
\usepackage{tikz}
\usepackage{color}
\makeatletter

\@addtoreset{equation}{section}
\makeatother 

\theoremstyle{plain}
\newtheorem{theorem}{Theorem}[section]

\newtheorem{proposition}[theorem]{Proposition}
\newtheorem{lemma}[theorem]{Lemma}

\newtheorem{ex}[theorem]{Example}

\theoremstyle{definition}
\newtheorem{definition}[theorem]{Definition}
\newtheorem{example}[theorem]{Example}
\newtheorem{remark}[theorem]{Remark}

\usepackage{latexsym}

\begin{document}

\title[Probabilistic pseudo knot theory]
  {Probabilistic pseudo knot theory}

\author{Ioannis Diamantis}
\address{Department of Data Analytics and Digitalisation,
School of Business and Economics, Maastricht University,
P.O.Box 616, 6200 MD, Maastricht,
The Netherlands.
}
\email{i.diamantis@maastrichtuniversity.nl}

\author{Louis H. Kauffman}
\address{Department of Mathematics, Statistics and Computer Science
University of Illinois at Chicago, 851 South Morgan Street
Chicago, IL, 60607-7045.}
\address{International Institute for Sustainability with Knotted Chiral Meta Matter (WPI-SKCM2),
    Hiroshima University, 1-3-1 Kagamiyama, Higashi-Hiroshima, Hiroshima 739-8526, Japan.}
\email{kauffman@math.uic.edu}
\urladdr{http://www.math.uic.edu/~kauffman/}

\subjclass[2020]{57K10, 57K12, 57K14, 60A99, 57K35, 57K99} 

\keywords{Probabilistic knot theory, pseudo knots, total variation distance, resolution distributions, probabilistic invariants, minimal resolution genus, stochastic topology}

\subjclass[2020]{57K10, 57K12, 57K14, 60A99, 57K35, 57K99} 

\keywords{Probabilistic knot theory, pseudo knots, total variation distance, resolution distributions, probabilistic invariants, minimal resolution genus, stochastic topology}

\setcounter{section}{-1}

\begin{abstract}
We develop the theory of \emph{probabilistic pseudo knots}, providing a framework for modeling knot diagrams with unresolved crossing information. Pseudo knot diagrams generalize classical diagrams by allowing certain crossings to remain unspecified; in the probabilistic setting, each such \emph{pre-crossing}, namely a crossing with undetermined over--under information, is assigned a probability describing the likelihood of resolving as a positive crossing, with complementary probability assigned to the negative resolution. This induces a probability distribution on complete classical resolutions and, by aggregation, a distribution on classical knot types, capturing uncertainty arising in physical, biological, and computational contexts. We introduce \emph{probabilistic equivalence}, defined via total variation distance between resolution distributions, and extend classical numerical quantities such as writhe and linking number to this setting. We also develop new probabilistic constructions, including the probabilistic chirality index, minimal resolution genus, probabilistic Seifert surface distributions, and polynomial invariants extending the Kauffman bracket. We further discuss matrix-based constructions, including probabilistic Seifert and Goeritz-type matrices, as well as probabilistic surgery producing distributions over \(3\)-manifolds. Finally, we discuss potential applications in molecular biology, materials science, and computational topology.
\end{abstract}

%%%%%%%%%%%%%%%%%%%%%%%%

\maketitle

\section{Introduction}\label{intro}

Knot theory has developed into a central area of low-dimensional topology, with deep connections to physics, chemistry, and biology \cites{A,B,F}. Classical knot diagrams, whose crossings carry over--under information, support a rich collection of algebraic, geometric, and quantum invariants used to classify topological entanglement \cites{Rolfsen,CrowellFox}. However, in many experimental or computational settings, including DNA imaging, protein folding, polymer entanglement, and complex woven materials, the precise crossing information may be incomplete, noisy, or
experimentally inaccessible \cites{B,WassermanCozzarelli1986,VirnauMirnyKardar2006,SumnersWhittington1988}. This motivates the development of mathematical frameworks capable of incorporating uncertainty directly into knot diagrams.

A natural step in this direction is the theory of \emph{pseudo knots}. Pseudo knot diagrams were first introduced by Hanaki~\cite{H} as knot diagrams in which some crossings are left unspecified. The theory was subsequently formalized as a topological theory by Henrich, Hoberg, Jablan, Johnson, Minten and Radovi\'c~\cite{HJMR}, through pseudo Reidemeister moves generating pseudo isotopy. These moves coincide locally with the rigid-vertex graph moves for four-valent vertices introduced by the second author in~\cite{Kauffman1989}. Thus a pre-crossing may be viewed either as an unresolved crossing or as a rigid local vertex preserving the cyclic order of the incident strands but carrying no over--under information. Pseudo knots belong to a broader family of diagrammatic extensions of classical knot theory, alongside virtual knot theory~\cites{LK2, D0}. Just as one may consider pseudo versions of classical knots, one may also consider pseudo virtual knots, obtained by allowing pre-crossings in virtual knot diagrams. The probabilistic constructions developed in this paper extend to this virtual setting. They admit diagrammatic and spatial interpretations closely related to singular links and spatial graphs, and have been extended in several directions, including annular and toroidal settings and polynomial invariants for pseudo knots and links~\cites{HD,BJW,D,D1,DLM3,DLM4}.

In this paper we introduce the theory of \emph{probabilistic pseudo knots}. Each pre-crossing is assigned a probability describing the likelihood of resolving as a positive crossing, with complementary probability assigned to the negative resolution. This produces a probability distribution on the set of complete classical resolutions of the diagram,
and also induces a probability distribution on the set of classical knot types arising from these resolutions. The resulting framework combines diagrammatic topology with probabilistic structure and provides a natural setting for comparing uncertain knot diagrams.

To formalize such comparisons, we introduce \emph{probabilistic equivalence}, defined via total variation distance between resolution distributions. Within this framework we extend classical numerical quantities, including the writhe and linking number, by taking suitable expectations over complete resolutions or over the induced resolution distributions. We also introduce structural quantities such as the minimal resolution genus and the
probabilistic chirality index, motivated by the study of molecular chirality and topological chemistry~\cite{F}.

Polynomial and quantum invariants play a central role in modern knot theory. The Kauffman bracket and Jones polynomial \cites{Kauffman,Jones,LK,LK1,WRT}, the HOMFLY-PT polynomial~\cite{HOMFLY}, HOMFLY-PT-type constructions for pseudo links \cite{D1}, and quantum group constructions~\cite{RT} provide powerful tools for distinguishing knots and links. We construct probabilistic analogues obtained by averaging classical diagrammatic or knot-type invariants over the appropriate resolution distributions and analyze their stability under probabilistic equivalence.

We further discuss probabilistic matrix-based constructions, including Seifert matrices~\cite{Seifert}, Goeritz matrices~\cites{Goeritz,GL}, and related determinants and signatures. Although these constructions are algebraic rather than ambient isotopy invariants, they describe how classical structures deform under crossing uncertainty. In addition, motivated by the Lickorish--Wallace theory of surgery \cites{Lickorish,Wallace}, we introduce a framework for \emph{probabilistic surgery}, assigning probability distributions to families of \(3\)-manifolds obtained from classical resolutions.

Beyond their theoretical interest, probabilistic pseudo knots provide a mathematical language for uncertainty in applications where knotting phenomena arise naturally. Potential applications include molecular biology \cites{B,F}, materials science, and computational topology, where
probabilistic sampling, random knot models \cite{SumnersWhittington1988}, and robustness considerations are essential~\cite{EZHLN}.

The paper is organized as follows. In \S~\ref{sec:preliminaries} we introduce probabilistic pseudo knots, their resolution trees, and the resulting spectrum of classical knot types. Section~\ref{sec:probeq} develops probabilistic equivalence via total variation distance and weighted similarity measures. In \S~\ref{distsym} we study probabilistic distances and symmetries, including probabilistic Gordian-type distances, mutation~\cites{Co,Ruberman}, and chirality. Section~\ref{numinv} introduces numerical probabilistic invariants, including probabilistic writhe, linking number, and crossing tendency. In \S~\ref{sec:polyinv} we develop probabilistic polynomial and quantum invariants.
Section~\ref{sec:probSeif} introduces probabilistic Seifert surfaces and minimal resolution genus. Finally, Section~\ref{sec:future} discusses further directions, including probabilistic surgery, additional matrix-based constructions, and potential applications of the theory.

%%%%%%%%%%%%%%%%%%%%%%%%%%%%%%%%%%%%%%%%%%%%%%%%%%%%%%%%%%%%%%%%%%%%%%%%%%%%%%%%%%%%%%%%%%%%%%%%%%%%%%%%%%%%%%%%%%%%%%%%%%%%%%%%%%%%%%%%%%%%%%%%%%%%%%%%%%%%%%%%%%%%%%%%%%%%%%%%%%%%%%%%%%%%%%%%%%%%%%%%%%%%%%%%%%%%%%%%%%%%%%

\section{The theory of probabilistic pseudo knots}\label{sec:preliminaries}

Pseudo knots, introduced by Hanaki~\cite{H}, generalize classical knot diagrams by allowing some crossings to have unspecified over--under information. Such crossings, called \emph{pre-crossings} or \emph{pseudo crossings}, provide a natural diagrammatic model for situations in which crossing information is incomplete, noisy, or experimentally inaccessible. This occurs, for instance, in molecular biology and materials science, where imaging techniques may detect the projection of an entangled object without determining the local height ordering of strands.

We now recall the basic notions of pseudo knot theory that will be needed throughout the paper. Unless otherwise stated, all diagrams are considered in the oriented category.

\subsection{Preliminaries}

\begin{definition}
A \emph{pseudo link diagram} is a generic link diagram in the plane in which some crossings are classical crossings, equipped with over--under information, while others are \emph{pseudo crossings} or \emph{pre-crossings}, where the over--under information is left unspecified (see Figure~\ref{pk1}).
\end{definition}

\begin{figure}[H]
\begin{center}
\includegraphics[width=1.6in]{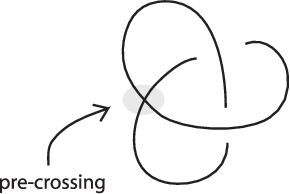}
\end{center}
\caption{A pseudo knot diagram.}
\label{pk1}
\end{figure}

Pseudo link diagrams admit a natural spatial interpretation, introduced in~\cite{DLM3}. Let \(D\) be a planar pseudo link diagram. A \emph{lift} of \(D\) is an embedded object in three-dimensional space constructed as follows:

\begin{itemize}
\item each classical crossing is realized inside a sufficiently small \(3\)-ball as a standard over--under crossing of embedded arcs;

\item each pseudo crossing is supported by a sufficiently small rigid embedded disc whose boundary contains the incident strands and records the absence of prescribed over--under information;

\item the arcs connecting the local crossing models are embedded smoothly in space so as to connect the corresponding local structures.
\end{itemize}

The resulting embedded object is called a \emph{spatial pseudo link} (see Figure~\ref{lif}).

\begin{figure}[H]
\begin{center}
\includegraphics[width=2.4in]{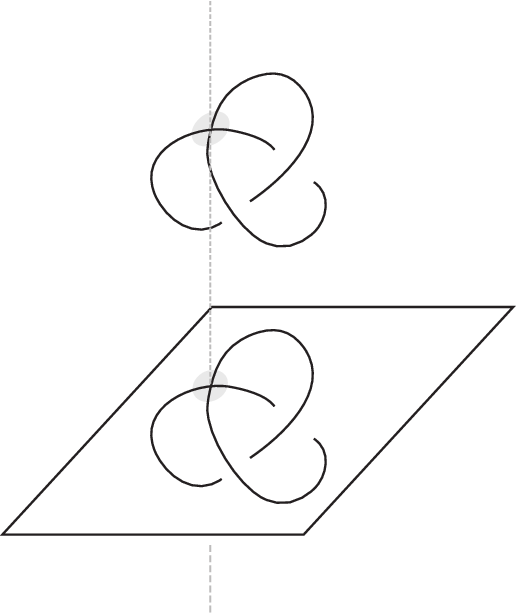}
\end{center}
\caption{The lift of a pseudo knot diagram to a spatial pseudo knot.}
\label{lif}
\end{figure}

Conversely, a spatial pseudo link admits a regular projection onto a plane in which classical crossings project to classical crossings, while the discs supporting pseudo crossings project to pseudo crossings. The resulting projection is a planar pseudo link diagram.

Two oriented spatial pseudo links are said to be \emph{ambient isotopic} if they are related by an ambient isotopy of \(\mathbb{R}^3\) preserving the local ball and disc structures supporting classical and pseudo crossings, respectively; see~\cite{DLM3}. The corresponding Reidemeister theorem for spatial pseudo links can then be stated as follows.

\begin{theorem}[Reidemeister theorem for pseudo links]
Two oriented spatial pseudo links are ambient isotopic if and only if any two planar pseudo link diagrams representing them are related by planar isotopy, the classical Reidemeister moves, and the pseudo Reidemeister moves illustrated in Figure~\ref{reid}.
\end{theorem}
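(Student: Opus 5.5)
The proof follows the standard strategy for rigid-vertex spatial graphs, adapted to the mixed setting of small balls (classical crossings) and rigid discs (pseudo crossings).

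\textbf{Easy direction.} I would first show that each move is realized by an ambient isotopy preserving the local structures.

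\begin{itemize}
\item Planar isotopy and the classical Reidemeister moves I, II, III occur in regions away from the discs. They lift to ambient isotopies of \(\mathbb{R}^3\) supported in small balls missing every rigid disc.
\item The pseudo Reidemeister moves of Figure~\ref{reid} are the rigid-vertex moves of~\cite{Kauffman1989}. These are: the pseudo~I move (a kink adjacent to a pre-crossing, i.e.\ a half-twist of the disc); the mixed pseudo~II-type move of a strand passing over or under a disc; and the mixed pseudo~III move of a strand sliding past a pre-crossing.
\item For each pseudo move I would write down an explicit isotopy. For pseudo~I, rotate the rigid disc by \(\pi\) about an axis in its plane. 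For the mixed moves, translate the disc as a rigid body above or below the passing strand.
\end{itemize}

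Hence diagrams related by the moves have ambient isotopic lifts. Any two lifts of the same diagram are isotopic, because the connecting arcs can be straightened vertically over the same projection.

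\textbf{Hard direction.} Suppose \(L_0\) and \(L_1\) are ambient isotopic spatial pseudo links.

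\begin{enumerate}
\item Replace each rigid disc by a flat rigid four-valent vertex: a small cross in the disc whose four endpoints are the strand endpoints. This yields a spatial four-valent graph \(G_t\) with rigid vertices, together with an isotopy \(G_t\), \(t\in[0,1]\), through rigid-vertex embeddings.
\item Approximate this isotopy by a piecewise-linear isotopy. Put it in general position with respect to the projection \(\pi:\mathbb{R}^3\to\mathbb{R}^2\). Then \(\pi(G_t)\) is a regular pseudo diagram except at finitely many times, and at each such time exactly one codimension-one degeneracy occurs.
\item List the degeneracies and match each to a move:
 \begin{itemize}
 \item a cusp of an edge gives move~I;
 \item a tangency of two edges gives move~II;
 \item a triple point of edges gives move~III;
 \item an edge passing through a vertex in projection gives the mixed move (over or under, according to height);
 \item the disc becoming vertical to the projection direction, so that its image degenerates to a segment, gives the pseudo~I twist.
 \end{itemize}
\item Between critical times the diagrams differ only by planar isotopy.
\end{enumerate}

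\textbf{Main obstacle.} The key difficulty is the last case and the exhaustiveness of the list. One must check that the rigidity of the disc forces the cyclic order at the vertex to be preserved up to reversal. Consequently the only vertex-local degeneracies are the flip of the disc (pseudo~I) and a strand crossing the vertex. The forbidden "switch" moves, which would change the cyclic order, cannot occur.

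To establish this, I would parametrize a neighbourhood of each disc by \(SO(3)\) acting on the flat cross. The orientation of the disc relative to the projection direction then determines the planar cyclic order: it is preserved when the disc is transverse to the projection direction, and reversed after passing through a vertical position. Each transverse passage through the vertical locus should correspond exactly to one pseudo~I move. This is the step I would verify most carefully, following the argument of~\cite{Kauffman1989} and~\cite{DLM3}.
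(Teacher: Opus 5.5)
\textbf{The gap is at PR1.} Note that the paper gives no proof of this theorem; it quotes it from \cite{DLM3}. Your argument therefore has to stand on its own, and it fails at the move PR1.

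In this paper PR1 creates or deletes a pre-crossing sitting in a kink. This is clear from the proof that $\mathrm{Spec}$ is invariant, where the two resolutions of that pre-crossing are a positive and a negative kink. Your realization of it, rotating the disc by $\pi$ about an axis in its plane, keeps the pre-crossing. What that rotation actually produces is the rigid-vertex twist move: a classical crossing on two adjacent edges of the pre-crossing is transferred to the opposite pair of edges. This is the analogue of the singular braid relation $\sigma_i\tau_i=\tau_i\sigma_i$.

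In fact no isotopy of the kind defined just before the theorem can realize PR1. An ambient isotopy preserving the disc structures preserves the number of discs. Each disc projects to exactly one pre-crossing in a regular projection. PR1, however, changes the number of pre-crossings by one.

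Your list of moves hides this. ``A strand passing over or under a disc'' and ``a strand sliding past a pre-crossing'' are one and the same R3-type move, and the twist has been relabelled as pseudo I. What your argument actually establishes is the Reidemeister theorem for rigid-vertex (singular) links in the sense of \cite{Kauffman1989}. There, disc-preserving isotopy corresponds to planar isotopy, R1--R3, the pass-through move and the twist move, and PR1 never appears.

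\textbf{What is missing.} The fix is on the spatial side, not in the diagrammatic analysis. Pseudo isotopy is rigid-vertex isotopy modulo PR1, and PR1 is not a rigid-vertex isotopy, since rigid-vertex isotopy preserves the number of vertices. So the equivalence of spatial pseudo links must be enlarged by an explicit extra relation. That relation should allow a disc whose two arcs are joined on one side by a small unknotted loop to be collapsed to a single arc. Without it, the statement with the definition as written holds only for the moves other than PR1.

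Once that relation is built in, the ``if'' direction for PR1 holds by definition. Your general-position argument then supplies the rest: cusps, tangencies, triple points, an edge passing through the vertex, and the disc becoming vertical, with rigidity excluding cyclic-order switches.

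One small correction there. The disc passing through a vertical position creates a classical crossing on each side of the pre-crossing simultaneously. That event is therefore a twist move composed with an R2, not a single move.
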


\begin{figure}[H]
\begin{center}
\includegraphics[width=5.8in]{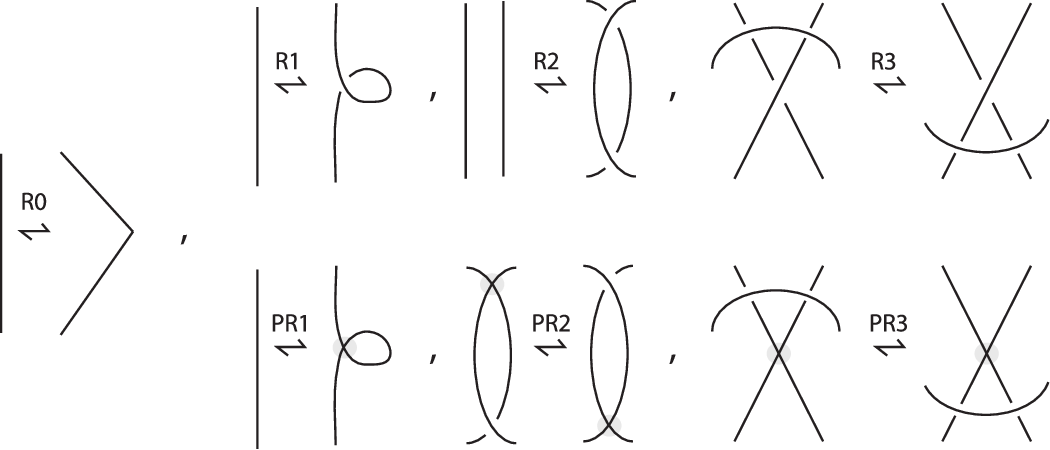}
\end{center}
\caption{Reidemeister moves for pseudo knots.}
\label{reid}
\end{figure}

Finally, following~\cite{DLM4}, regular isotopy for pseudo links extends the notion introduced by Kauffman~\cite{Kauffman}.

\begin{definition}\label{def:pseudo-regular}
\emph{Regular isotopy for pseudo links} is the equivalence relation among pseudo link diagrams generated by planar isotopy \((R0)\), the classical Reidemeister moves \(R2\) and \(R3\), and the pseudo Reidemeister moves \(PR2\) and \(PR3\) illustrated in Figure~\ref{reid}. In particular, the moves \(R1\) and \(PR1\) are excluded.
\end{definition}

\subsection{Probabilistic pseudo knots}

We now introduce probabilistic pseudo knots and links. The basic idea is to enrich a pseudo link diagram by assigning to each pre-crossing a probability describing the likelihood of resolving that pre-crossing as positive. Since the notions of positive and negative crossings require orientations, all probabilistic pseudo knots and links in this paper are considered in the oriented category.

\begin{definition}\label{ppk}
An \emph{oriented probabilistic pseudo link diagram} is an oriented pseudo link diagram in which each pre-crossing \(pc_i\) is decorated with a number \(p_i\in [0,1]\), interpreted as the probability that \(pc_i\) resolves as a positive crossing. That is,
\[
p_i := \mathbb{P}(pc_i=\text{positive crossing}),
\qquad
1-p_i := \mathbb{P}(pc_i=\text{negative crossing}).
\]
When the underlying pseudo link has one component, we call the diagram an \emph{oriented probabilistic pseudo knot diagram}.

If every crossing of a diagram is a pre-crossing equipped with such a probability, the diagram is called a \emph{completely probabilistic pseudo knot} or \emph{completely probabilistic pseudo link}, according to the number of components.
\end{definition}

\begin{remark}\rm
A probabilistic pseudo knot may be viewed both as a labelled diagram and as a probabilistic object. Let \(K\) be a probabilistic pseudo knot with pre-crossings
\(pc_1,\ldots,pc_n\). The set of complete sign assignments is \(\Omega := \{+,-\}^{n}\), where an element of \(\Omega\) records, for each pre-crossing, whether it is resolved as a positive or negative crossing.

Unless stated otherwise, we equip \(\Omega\) with the product probability measure determined by independent Bernoulli random variables
\[
X_i:\Omega\to\{+,-\},
\]
satisfying
\[
\mathbb{P}(X_i=+) = p_i,
\qquad
\mathbb{P}(X_i=-) = 1-p_i.
\]
Thus distinct pre-crossings are assumed to resolve independently. This independence assumption is part of the basic model; more general models with dependent resolutions may be considered separately.
\end{remark}

Classical knot theory is recovered as a degenerate case of this construction. A classical positive crossing may be encoded by a probability label \(p=1\), while a classical negative crossing may be encoded by a probability label \(p=0\) (see Figure~\ref{probcr}). In this sense, a classical diagram determines a degenerate probabilistic pseudo diagram.

\begin{figure}[H]
\begin{center}
\includegraphics[width=2.8in]{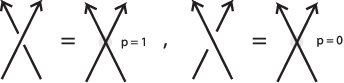}
\end{center}
\caption{Classical crossings encoded as degenerate probabilistic crossings with \(p=1\) or \(p=0\).}
\label{probcr}
\end{figure}

When considering equivalence classes of oriented probabilistic pseudo link diagrams, the probability labels are treated as part of the diagrammatic data. Thus, under isotopy, each labelled pre-crossing is transported together with its probability label. In particular, pseudo Reidemeister moves do not impose algebraic relations among probability labels; they only move or rearrange the labelled pre-crossings already present in the diagram.

\begin{definition}\label{def:prob-pseudo-regular}
\emph{Regular isotopy for probabilistic pseudo links} is the equivalence relation among oriented probabilistic pseudo link diagrams generated by planar isotopy \((R0)\), the classical Reidemeister moves \(R2\) and \(R3\), and the pseudo Reidemeister moves \(PR2\) and \(PR3\), with probability labels on pre-crossings transported along the moves. As in the case of regular isotopy of classical knots, the moves \(R1\) and \(PR1\) are excluded.
\end{definition}

Figure~\ref{orr} illustrates representative oriented Reidemeister and pseudo Reidemeister moves for probabilistic pseudo knots. All moves are understood to occur between oriented strands, allowing all compatible orientations, and all probability labels are carried by the corresponding pre-crossings throughout the move. For detailed discussions of oriented Reidemeister moves and generating sets, see~\cites{NOS,BEHY}.

\begin{figure}[H]
\begin{center}
\includegraphics[width=4.5in]{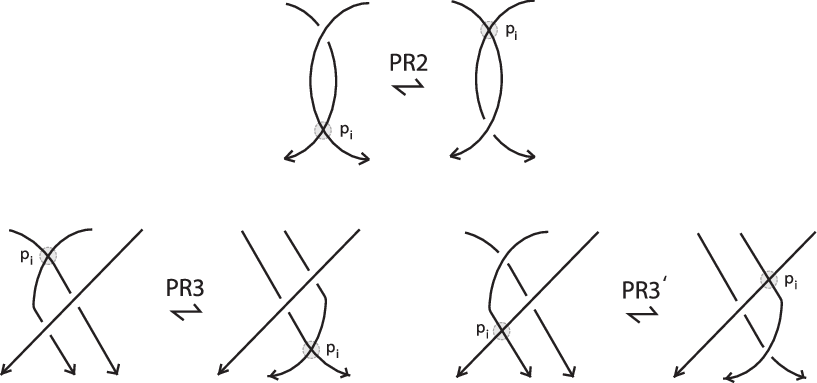}
\end{center}
\caption{Some Reidemeister moves for oriented probabilistic pseudo knots.}
\label{orr}
\end{figure}

%%%%%%%%%%%%%%%%%%%%%%%%%%%%%%%%%%%%%%%%%%%%%%%%%%%%%%%%%%%%%%%%%%%%%%%%%

\subsection{The Probabilistic Resolution Tree}

The probabilistic structure of probabilistic pseudo knots naturally leads to the notion of a {\it probabilistic resolution tree}, a tool that systematically organizes all classical resolutions of a probabilistic pseudo knot together with their associated probabilities. This construction extends the {\it weighted resolution set} introduced in \cite{HJMR} and provides a convenient framework for describing the distribution of classical outcomes arising from probabilistic resolutions. More precisely, we have:

\begin{definition}\rm
Let $K$ be an oriented probabilistic pseudo knot with pre-crossings $pc_1,\ldots,pc_n$, each labeled with a probability $p_i \in [0,1]$. The \emph{probabilistic resolution tree} of $K$, denoted $T(K)$, is a binary tree defined up to isomorphism as follows:

\begin{enumerate}

\item \textbf{Structure.}
Each node represents a pseudo knot diagram obtained by resolving a subset of the pre-crossings of $K$.

\begin{itemize}
\item The \emph{root node} corresponds to $K$ with all pre-crossings unresolved.

\item \emph{Internal nodes} correspond to partial resolutions.

\item \emph{Leaf nodes} correspond to complete resolutions and hence to classical knot diagrams.
\end{itemize}

\smallbreak

\item \textbf{Branches.}
Each edge corresponds to resolving one previously unresolved pre-crossing.

\begin{itemize}
\item A \emph{positive branch} resolves the chosen pre-crossing as a positive crossing and is assigned weight $p_i$.

\item A \emph{negative branch} resolves it as a negative crossing and is assigned weight $1-p_i$.
\end{itemize}

The tree is considered up to the natural isomorphisms induced by reordering the pre-crossings, since different orders of resolving pre-crossings yield the same collection of complete resolutions with the same associated probabilities.

\smallbreak

\item \textbf{Resolution Distribution.}
Each leaf node $C$ determines a classical resolution of $K$. Assuming independence of resolutions at distinct pre-crossings, the probability assigned to $C$ is

\[
\mathbb{P}(C)
=
\prod_{i \in P(C)} p_i
\prod_{j \in N(C)} (1-p_j),
\]

where $P(C)$ (resp.~$N(C)$) denotes the set of pre-crossings resolved positively (resp.~negatively) in $C$.

This assignment defines a probability distribution on the set
\[
\mathcal{C}(K)
:=
\{ \text{all complete classical resolutions of } K \},
\]
since
\[
\sum_{C \in \mathcal{C}(K)} \mathbb{P}(C) = 1 .
\]

\smallbreak

\item \textbf{Dominant and Rare Resolutions.}
Let $\tau \in [0,1]$ be a fixed threshold.

A resolution $C$ is called

\begin{itemize}
\item \emph{dominant} if $\mathbb{P}(C)>\tau$,

\item \emph{rare} if $\mathbb{P}(C)\leq \tau$.
\end{itemize}

The corresponding sets are denoted

\[
\mathcal{C}_{\mathrm{dom}}(K)
\quad \text{and} \quad
\mathcal{C}_{\mathrm{rare}}(K)
=
\mathcal{C}(K)\setminus
\mathcal{C}_{\mathrm{dom}}(K).
\]

\end{enumerate}
\end{definition}

\begin{remark}\rm
Each pre-crossing $pc_i$ may be viewed as an independent Bernoulli random variable taking values in $\{+,-\}$ with probabilities $p_i$ and $1-p_i$, respectively. Thus the probabilistic resolution tree encodes the product probability measure on the set of classical resolutions. Distinct complete resolutions may yield classical diagrams representing the same knot type. Consequently, the probability distribution on $\mathcal{C}(K)$ induces an aggregated probability distribution on classical knot types. The threshold $\tau$ distinguishing dominant and rare resolutions is an auxiliary, user-chosen parameter intended for heuristic or application-driven analysis. It is diagram-dependent and does not define a topological or probabilistic invariant, nor does it play a role in the invariant theory developed later in this paper.
\end{remark}

\begin{ex}\rm
Consider a completely probabilistic trefoil knot $K$ with three pre-crossings labeled $p_1=0.7$, $p_2=0.6$, and $p_3=0.8$. The probabilistic resolution tree $T(K)$ is illustrated in Figure~\ref{ptr}.

\begin{enumerate}

\item \textbf{Root Node.}
Represents $K$ with no pre-crossings resolved.

\smallbreak

\item \textbf{Branches.}
At each stage one pre-crossing is resolved, producing a positive branch with weight $p_i$ and a negative branch with weight $1-p_i$.

\smallbreak

\item \textbf{Leaf Nodes.}
Each leaf corresponds to a classical resolution.

\smallbreak

\item \textbf{Resulting Knot Types.}
In this example three knot types arise:

\begin{itemize}

\item Left-handed trefoil:
\[
P_{LH} = p_1 p_2 p_3 .
\]

\item Right-handed trefoil:
\[
P_{RH} =
\prod_{i=1}^{3} (1-p_i).
\]

\item Unknot:
arising from the remaining resolutions, with probability
\[
P_U = 1- P_{LH}- P_{RH}.
\]

\end{itemize}

\smallbreak

\item \textbf{Dominant and Rare Resolutions.}
For example, if $\tau=0.1$:

\begin{itemize}

\item All positive crossings:
\[
0.7 \cdot 0.6 \cdot 0.8 = 0.336
\]
(dominant).

\item All negative crossings:
\[
(1-0.7)(1-0.6)(1-0.8)=0.024
\]
(rare).

\end{itemize}

\end{enumerate}
\end{ex} 

\begin{figure}
\begin{center}
\includegraphics[width=5.2in]{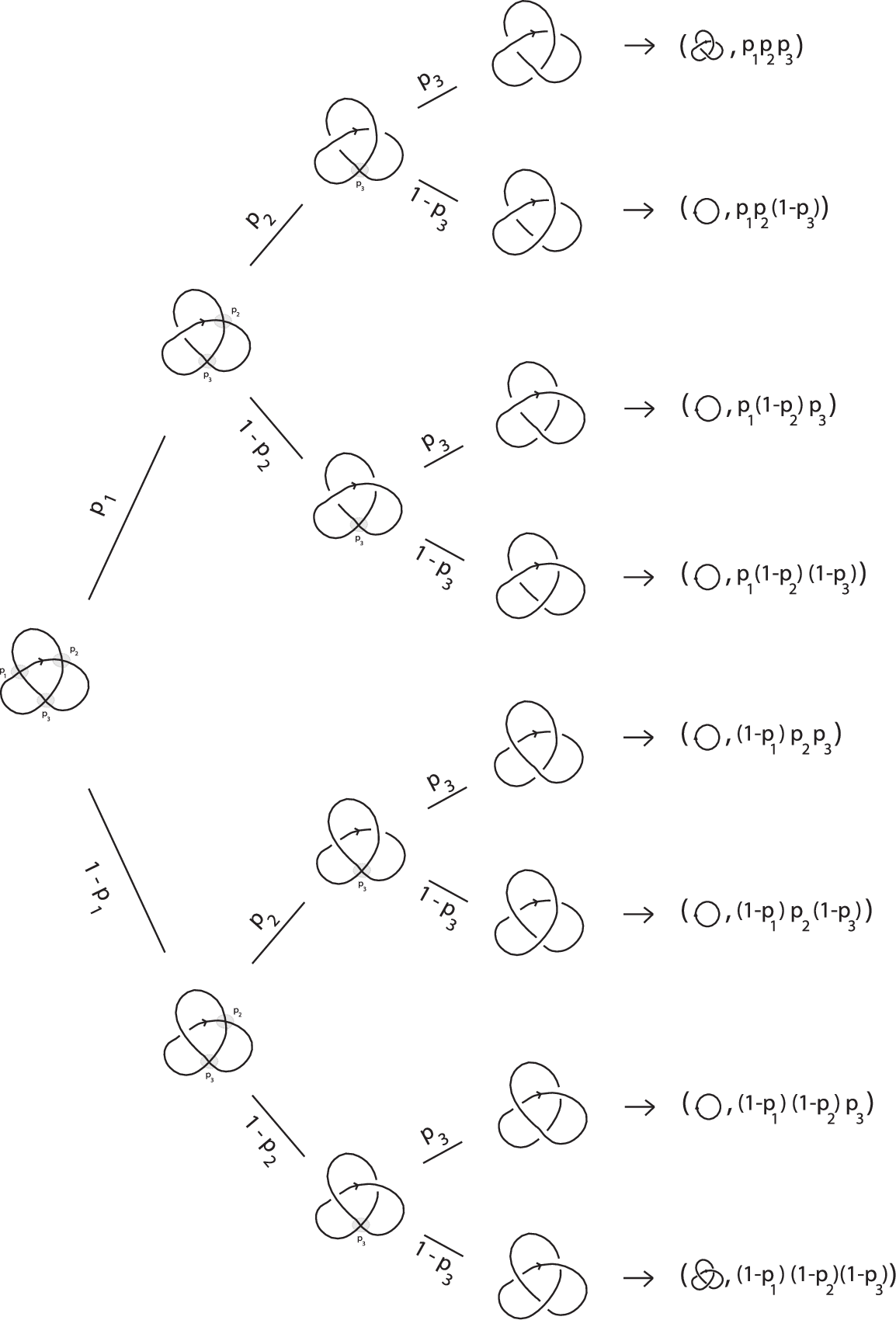}
\end{center}
\caption{Probabilistic resolution tree of the completely probabilistic trefoil knot.}
\label{ptr}
\end{figure}

%%%%%%%%%%%%%%%%%%%%%%%%%%%%%%%%%%%%%%%%%%%%%%%%%%%%%%%%%%%%%%%%%%%%%%%%%%

\subsection{The Spectrum of a Probabilistic Pseudo Knot}

We now introduce a fundamental invariant that captures the full probability distribution over classical knot types obtained from resolving a probabilistic pseudo knot.

\begin{definition}\label{def:spectrum}
Let $P$ be an oriented probabilistic pseudo knot diagram with pre-crossings $c_1,\dots,c_n$, each assigned a probability $p_i\in[0,1]$. Let
\[
\mathcal K(P)
:=
\{\text{oriented classical knot types arising among complete resolutions of }P\}.
\]

The \emph{spectrum} of $P$, denoted $\mathrm{Spec}(P)$, is the probability distribution on $\mathcal K(P)$ defined by

\[
\mathrm{Spec}(P)
=
\left\{
(K,\mathbb P_P[K])
\ \middle|\
K\in\mathcal K(P),\;
\mathbb P_P[K]>0
\right\},
\]

where

\[
\mathbb P_P[K]
=
\sum_{\substack{r\in\mathcal C(P)\\
\mathrm{type}(r)=K}}
\prod_{i\in P(r)} p_i
\prod_{j\in N(r)}(1-p_j).
\]

Here $\mathcal C(P)$ denotes the set of complete resolutions of $P$, while $P(r)$ and $N(r)$ denote the sets of pre-crossings resolved positively and negatively in $r$, respectively.
\end{definition}

\noindent In the special case where all $p_i=\frac12$, each of the $2^n$ resolutions occurs with equal probability. In this case
\[
\mathbb P_P[K]
=
\frac{
\#\{r\in\mathcal C(P)\mid \mathrm{type}(r)=K\}
}{2^n},
\]

so that the spectrum recovers the weighted resolution set introduced in \cite{HJMR}.

\begin{remark}\rm
While the probabilistic resolution tree records all sign assignments and resulting diagrams, the spectrum aggregates this information into a probability distribution on isotopy classes of classical knots. It therefore captures the probabilistic topological behaviour of a probabilistic pseudo knot.
\end{remark}

%%%%%%%%%%%%%%%%%%%%%%%%%%%%%%%%%%

\begin{example}
Suppose $P$ has two pre-crossings $c_1,c_2$, each with probability $\frac12$, and assume that one resolution yields a figure--eight knot while the remaining three yield the unknot. Then

\[
\mathbb P_P[\text{figure--eight}]=\tfrac14,
\qquad
\mathbb P_P[\text{unknot}]=\tfrac34,
\]

and

\[
\mathrm{Spec}(P)
=
\left\{
(\text{figure-eight},\tfrac14),
(\text{unknot},\tfrac34)
\right\}.
\]
\end{example}

\begin{remark}\rm
The spectrum allows the computation of topological probabilities, for example the probability that $P$ resolves into a nontrivial knot.
\end{remark}

\begin{proposition}
Let $P$ and $P'$ be oriented probabilistic pseudo knot diagrams related by pseudo isotopy, with probability labels transported along the moves and with any pre-crossing introduced by a \(PR1\) move assigned an arbitrary label in \([0,1]\). Then
\[
\mathrm{Spec}(P)=\mathrm{Spec}(P').
\]
\end{proposition}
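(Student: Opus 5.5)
Since pseudo isotopy is generated by planar isotopy, the classical Reidemeister moves, and the pseudo Reidemeister moves $PR1$, $PR2$, $PR3$ of Figure~\ref{reid}, it is enough to treat a single move $P\to P'$. For each move we will write down a bijection (or, for $PR1$, a two-to-one map) $\phi:\mathcal C(P')\to\mathcal C(P)$ between complete resolutions with two properties. First, $r'$ and $\phi(r')$ are related by classical Reidemeister moves, so $\mathrm{type}(r')=\mathrm{type}(\phi(r'))$. Second, $\phi$ pushes the product measure on $\mathcal C(P')$ forward to the product measure on $\mathcal C(P)$. Given these, for each $K$ we get $\mathbb P_{P'}[K]=\sum_{\mathrm{type}(r')=K}\mathbb P(r')=\sum_{\mathrm{type}(r)=K}\mathbb P(r)=\mathbb P_P[K]$, and hence $\mathrm{Spec}(P)=\mathrm{Spec}(P')$.

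\textbf{Moves involving no pre-crossings.} These are planar isotopy and the classical moves $R1$, $R2$, $R3$. The pre-crossings and their labels are untouched, so we take $\phi$ to be the identity on sign assignments $\Omega=\{+,-\}^n$. Each resolution $r'$ differs from $\phi(r')$ by the same classical move, so the knot types agree, and the probabilities agree term by term.

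\textbf{$PR2$ and $PR3$.} Here labelled pre-crossings are transported, so there is a canonical identification of pre-crossings, and $\phi$ is the identity on $\Omega$. The probability of each assignment is unchanged. The key check is that for every choice of signs at the pre-crossings involved, the two resolved local pictures are related by classical moves. For $PR3$, one strand passes classically over or under a pre-crossing; once the pre-crossing is given a sign, the resulting three-strand tangle is related to its counterpart by a classical $R3$. For $PR2$, the two pre-crossings must resolve to a cancelling pair. This is the main obstacle: a $PR2$ move removes two adjacent pre-crossings, and resolving them independently could give two same-type crossings that do not cancel. I expect the intended reading, given that labels are transported and no relations are imposed, to be that the two pre-crossings of a $PR2$ are resolved in the correlated, cancelling way. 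With orientations this means opposite signs, so their joint law is a single Bernoulli variable. This assumption has to be made explicit; otherwise the statement fails. Under that convention, every resolution of the bigon is a classical $R2$ configuration, and $\phi$ collapses it measure-preservingly.

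\textbf{$PR1$.} A new pre-crossing $c$ forming a kink is added, with arbitrary label $q$. The map $\phi:\mathcal C(P')\to\mathcal C(P)$ forgets the sign at $c$. Both resolutions of a kink are classical $R1$ kinks, so $\mathrm{type}(r')=\mathrm{type}(\phi(r'))$. Since resolutions are independent, the fibre over $r$ has total probability $\mathbb P(r)(q+(1-q))=\mathbb P(r)$. This is exactly why the label on $c$ can be arbitrary. Composing along a sequence of moves then proves the proposition.
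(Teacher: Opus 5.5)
Your overall scheme matches the paper's proof. For every move except \(PR1\), the identity on sign assignments gives a probability-preserving bijection of complete resolutions in which corresponding resolutions are isotopic. For \(PR1\), the fibre over each resolution has mass \(\mathbb P(r)\,(q+(1-q))=\mathbb P(r)\). Your treatment of planar isotopy, the classical moves, \(PR3\) and \(PR1\) is correct.

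The gap is in \(PR2\). You take \(PR2\) to cancel a bigon formed by two pre-crossings. You then conclude the proposition fails unless those two pre-crossings are forced to resolve with opposite signs, and you add that convention. That is not the \(PR2\) move of this theory. The paper states three things that rule your reading out:
\begin{enumerate}
\item The pseudo Reidemeister moves coincide with Kauffman's rigid-vertex moves for four-valent vertices (plus \(PR1\)), and rigid-vertex isotopy never creates or cancels a pair of vertices.
\item Pseudo Reidemeister moves ``only move or rearrange the labelled pre-crossings already present in the diagram''.
\item The proof of invariance of \(pcp\) uses that \(PR2\) preserves the number of pre-crossings.
\end{enumerate}
So your ``main obstacle'' is spurious, and the claim that the statement otherwise fails is false. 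Your convention, merging two independent labels into a single Bernoulli variable, imposes exactly the relation among labels and the dependence that the model excludes. Meanwhile, the actual \(PR2\) move is never checked.

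The true \(PR2\) is the rigid-vertex twist. A pre-crossing and a classical crossing lie between the same two strands, and the classical crossing is moved from one side of the pre-crossing to the other; in braid form this is \(\sigma_i p_i=p_i\sigma_i\). Handle it exactly as you handled \(PR3\), taking \(\phi\) to be the identity on \(\Omega\). For each sign at the pre-crossing, the two resolved local pictures are \(\sigma_i^{a}\sigma_i^{b}\) and \(\sigma_i^{b}\sigma_i^{a}\) with \(a,b\in\{\pm1\}\). These are isotopic, and the transported crossing keeps its sign. Hence corresponding resolutions have the same probability and the same oriented knot type, with no extra convention needed. This is what the paper's one-line argument for all non-\(PR1\) moves asserts.
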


\begin{proof}
All Reidemeister and pseudo Reidemeister moves except \(PR1\) induce a bijection between complete resolutions of \(P\) and those of \(P'\), obtained by transporting probability labels along the isotopy. Corresponding resolutions preserve crossing signs and produce ambient isotopic classical knots. Since the probabilities assigned to corresponding resolutions coincide, these moves preserve the induced probability distribution on classical knot types.

It remains to consider the move \(PR1\). Suppose that a pre-crossing with probability label \(p\) is introduced or removed by a \(PR1\) move. Its two possible resolutions are a positive kink and a negative kink, both of which are removed by the classical Reidemeister move \(R1\) and therefore yield the same classical knot type. The total probability contribution of these two resolutions is \(p+(1-p)=1\). Thus \(PR1\) does not change the aggregated probability assigned to any classical knot type. Hence the spectrum is invariant under full pseudo isotopy preserving probability labels.
\end{proof}

\begin{definition}\label{def:spectrum-invariant}
Let $I$ be a classical knot invariant. The \emph{spectrum of $I$} with respect to $P$ is the induced probability distribution on invariant values defined by

\[
\mathrm{Spec}_I(P)
=
\left\{
(a,\mathbb P_P[I=a])
\ \middle|\
a\in I(\mathcal K(P)),\;
\mathbb P_P[I=a]>0
\right\}.
\]

where
\[
\mathbb P_P[I=a]
=
\sum_{\substack{K\in\mathcal K(P)\\ I(K)=a}}
\mathbb P_P[K].
\]
\end{definition}

\begin{remark}\rm
The spectrum of an invariant describes the distribution of values attained by $I$ across all classical resolutions of $P$. Statistical quantities such as expectations or variances may be computed for numerical invariants such as crossing number, genus, or signature.
\end{remark}

Thus the spectrum $\mathrm{Spec}(P)$ summarizes the probabilistic topological behaviour of $P$ by encoding the full distribution of classical knot types arising from its resolutions.

%%%%%%%%%%%%%%%%%%%%%%%%%%%%%%%%%%%%%%%%%%%%%%%%%%%%%%%%%%%%%%%%%%%%%%%%%%%%%%%%%%%%%%%%%%%%%%%%%%%%%%%%%%%%%%%%%%%%%%%%%%%%%%%%%%%%%%%%%%%%%%%%%%%%%

\section{Probabilistic Equivalence}\label{sec:probeq}

In classical knot theory, two knots are equivalent if they are related by a finite sequence of Reidemeister moves. This notion of equivalence is purely deterministic, reflecting the fact that classical knot diagrams contain no uncertainty. In probabilistic pseudo knot theory, pre-crossings are equipped with probability assignments, and each diagram determines a probability distribution on classical knot types through its set of resolutions. Consequently, equivalence must take into account not only diagrammatic transformations but also the probabilistic information carried by these assignments. We therefore introduce the notion of \emph{probabilistic equivalence}, which compares probabilistic pseudo knots through the probability distributions induced by their classical resolutions. This notion complements pseudo isotopy by allowing diagrams with different probability decorations to be compared at the level of their induced topological behaviour.

\subsection{Resolution Distributions}

To define probabilistic equivalence, we associate to each probabilistic pseudo knot a probability distribution describing the likelihood of obtaining each classical knot type after resolution.

\begin{definition}\rm
Let $K$ be a probabilistic pseudo knot and let
\[
\mathcal K(K)
:=
\{\text{classical knot types arising among complete resolutions of }K\}.
\]

The {\it resolution distribution} of $K$, denoted $\mu_K$, is the probability measure on $\mathcal K(K)$ defined by
\[
\mu_K(C)
:=
\mathbb P_K[C],
\]
where $\mathbb P_K[C]$ denotes the probability that a complete resolution of $K$ yields a classical knot of type $C$.

In particular,
\[
\sum_{C\in \mathcal K(K)} \mu_K(C)=1.
\]
\end{definition}

To compare two probabilistic pseudo knots $K_1$ and $K_2$, we measure the difference between their resolution distributions using the total variation distance.

\begin{definition}\rm \label{tvd}
Let $\mu_{K_1}$ and $\mu_{K_2}$ be the resolution distributions of probabilistic pseudo knots $K_1$ and $K_2$. Their {\it total variation distance} is defined by
\[
d_{TV}(\mu_{K_1},\mu_{K_2})
=
\frac12
\sum_{C\in \mathcal K(K_1)\cup \mathcal K(K_2)}
\left|
\mu_{K_1}(C)-\mu_{K_2}(C)
\right|,
\]
where probabilities are understood to be zero for knot types not appearing among the resolutions of a given diagram.
\end{definition}

\begin{remark}\rm
The total variation distance satisfies
\[
0\le d_{TV}(\mu_{K_1},\mu_{K_2})\le 1.
\]
Moreover, $d_{TV}=0$ if and only if the resolution distributions coincide, while $d_{TV}=1$ if and only if their probabilistic supports are disjoint.
\end{remark}

%%%%%%%%%%%%%%%%%%%%%%%%%%%%%%%%%%%%%%%%%%%%%%%%%%%%%%%%%%%%%%%%%%%%%%%%

\subsection{Probabilistic Equivalence}

We now introduce a notion of \emph{approximate probabilistic equivalence}, which compares probabilistic pseudo knots through the proximity of their resolution distributions.

\begin{definition}\rm
Fix a tolerance parameter $\epsilon\geq 0$. Two probabilistic pseudo link diagrams (considered modulo label-preserving pseudo isotopy) $K_1$ and $K_2$ are said to be {\it probabilistically equivalent}, denoted
\[
K_1 \sim_{\mathbb P} K_2,
\]
if their resolution distributions satisfy
\[
d_{TV}(\mu_{K_1},\mu_{K_2}) \le \epsilon,
\]
where $d_{TV}$ denotes the total variation distance.
\end{definition}

\noindent Throughout this paper the tolerance $\epsilon\geq 0$ is fixed in advance and regarded as an external similarity parameter.

\begin{remark}\rm
\begin{enumerate}

\item Probabilistic equivalence is a tolerance-based similarity relation rather than a strict equivalence relation. When $\epsilon=0$, it coincides with equality of resolution distributions and therefore defines a genuine equivalence relation, since total variation distance is a metric.

\item For $\epsilon>0$, probabilistic equivalence is generally not transitive. It is possible to have
\[
K_1 \sim_{\mathbb P} K_2
\quad \text{and} \quad
K_2 \sim_{\mathbb P} K_3,
\]
while
\[
K_1 \not\sim_{\mathbb P} K_3.
\]

\item Probabilistic equivalence is not a purely topological equivalence relation. Rather, it provides a metric notion of similarity comparing probabilistic pseudo knots through their stochastic resolution behaviour.

\end{enumerate}
\end{remark}

\begin{ex}\rm
Consider two probabilistic pseudo Hopf links $L_1$ and $L_2$, each with a single pre-crossing between their components. For $L_1$ let $p_1=0.8$, and for $L_2$ let $p_2=0.85$.

The corresponding resolution distributions are

\[
\mu_{L_1}(C_+)=0.8,
\qquad
\mu_{L_1}(C_-)=0.2,
\]

and

\[
\mu_{L_2}(C_+)=0.85,
\qquad
\mu_{L_2}(C_-)=0.15,
\]

where $C_+$ and $C_-$ denote the classical links obtained by resolving the pre-crossing positively or negatively.

Hence

\[
d_{TV}
=
\frac12
\Big(
|0.8-0.85|
+
|0.2-0.15|
\Big)
=
0.05 .
\]

If $\epsilon=0.1$, then $L_1 \sim_{\mathbb P} L_2$.
\end{ex}

%%%%%%%%%%%%%%%%%%%%%%%%%%%%%%%%%%%%%%%%%%%%%%%%%%%%%%%%%%%%%%%%%%%%%%%

\subsubsection{Interpretation of Probabilistic Equivalence}

Probabilistic equivalence compares probabilistic pseudo knots through the proximity of their resolution distributions in total variation distance, rather than through exact equality of classical invariants. Small differences in pre-crossing probabilities, or discrepancies arising from low-probability resolutions, are tolerated whenever the induced probability distributions on knot types remain sufficiently close.

\smallbreak

Situations may arise in which a classical knot type appears among the resolutions of one probabilistic pseudo knot but not the other. Such asymmetries are automatically captured by the total variation distance, which penalizes missing outcomes proportionally to their probability mass. Consequently, mismatches caused by rare or low-probability resolutions have only a limited effect on probabilistic equivalence.

In Example~\ref{exas} we illustrate this behaviour using two probabilistic pseudo knots with different resolution supports.

\begin{ex}\label{exas}\rm
Consider a probabilistic pseudo knot $K_1$ with possible resolved knot types $C_1,C_2,$ and $C_3$ satisfying
\[
\mu_{K_1}(C_1)=0.55,\qquad
\mu_{K_1}(C_2)=0.40,\qquad
\mu_{K_1}(C_3)=0.05.
\]

Let $K_2$ be a probabilistic pseudo knot with resolutions $C_1$ and $C_2$ given by
\[
\mu_{K_2}(C_1)=0.60,
\qquad
\mu_{K_2}(C_2)=0.40.
\]

Then
\[
d_{TV}
=
\frac12
\left(
|0.55-0.60|
+
|0.40-0.40|
+
|0.05-0|
\right)
=
0.05 .
\]

If $\epsilon=0.1$, then $K_1\sim_{\mathbb P}K_2$.

The knot type $C_3$, which is absent from $K_2$, carries only a small probability mass and therefore contributes minimally to the total variation distance. The dominant knot types $C_1$ and $C_2$ have very similar probabilities in both knots, ensuring probabilistic equivalence.
\end{ex}

%%%%%%%%%%%%%%%%%%%%%%%%%%%%%%%%%%%%%%%%%%%%%%%%%%%%%%%%%%%%%%%%%%%%%%%

\subsection{Weighted distances}

Each probabilistic pseudo knot \(K\) determines a resolution distribution
\[
\mu_K:\mathcal K(K)\to[0,1],
\qquad
\sum_{C\in\mathcal K(K)}\mu_K(C)=1,
\]
where \(\mathcal K(K)\) denotes the set of classical knot types arising among the complete resolutions of \(K\).

In many applications, not all resolved knot types carry equal importance. Certain knot types may represent dominant physical configurations or topologically significant phenomena, while others may correspond to rare or negligible outcomes. To incorporate such priorities, we introduce a bounded weight function
\[
w:\mathcal K \to [0,\infty),
\]
assigning to each classical knot type a nonnegative importance weight. Here \(\mathcal K\) denotes the set of all classical knot types.

\begin{definition}\rm
The \emph{weighted total variation discrepancy} between probabilistic pseudo knots \(K_1\) and \(K_2\) is defined by
\[
d_{TV}^{w}(\mu_{K_1},\mu_{K_2})
:=
\frac12
\sum_{C\in\mathcal K(K_1)\cup\mathcal K(K_2)}
w(C)
\big|
\mu_{K_1}(C)-\mu_{K_2}(C)
\big|,
\]
where probabilities are understood to be zero for knot types not appearing among the resolutions of a given diagram.

When \(w\equiv 1\), this reduces to the standard total variation distance.
\end{definition}

\begin{remark}\rm
Unlike the standard total variation distance, \(d_{TV}^{w}\) need not be bounded above by \(1\) unless the weight function is suitably normalized. Thus \(d_{TV}^{w}\) should be regarded as a weighted discrepancy or similarity measure rather than a probability metric in the usual sense.
\end{remark}

\begin{definition}
Fix a bounded weight function \(w\) and a tolerance \(\epsilon>0\). Two probabilistic pseudo knots \(K_1\) and \(K_2\) are said to be \emph{\(w\)-probabilistically equivalent} if
\[
d_{TV}^{w}(\mu_{K_1},\mu_{K_2})\le \epsilon .
\]
\end{definition}

As in the unweighted case, this notion defines a tolerance-based similarity relation rather than a strict equivalence relation whenever \(\epsilon>0\).

In the following examples we illustrate how \(w\)-probabilistic equivalence refines the comparison of probabilistic pseudo knots by emphasizing, or de-emphasizing, discrepancies in selected knot types.

\begin{ex}\rm
Suppose two probabilistic pseudo knots \(K_1\) and \(K_2\) have resolution distributions supported on knot types \(C_1\) and \(C_2\), given by
\[
\mu_{K_1}(C_1)=0.95,\quad \mu_{K_1}(C_2)=0.05,
\]
\[
\mu_{K_2}(C_1)=0.90,\quad \mu_{K_2}(C_2)=0.10.
\]
The standard total variation distance is
\[
d_{TV}(\mu_{K_1},\mu_{K_2})
=
\frac12\big(|0.95-0.90|+|0.05-0.10|\big)
=
0.05.
\]

Suppose now that \(C_2\) represents a particularly important topological phenomenon. Choosing
\[
w(C_1)=1,
\qquad
w(C_2)=10,
\]
yields
\[
d_{TV}^{w}
= \frac12\big(1\cdot|0.95-0.90|
+
10\cdot|0.05-0.10|\big)
= 0.275.
\]

Although the unweighted discrepancy is small, the weighted discrepancy detects the significant difference in the important knot type \(C_2\).
\end{ex}

\begin{ex}\rm
Consider probabilistic pseudo knots \(K_1'\) and \(K_2'\) with resolution distributions supported on knot types \(D_1\) and \(D_2\):
\[
\mu_{K_1'}(D_1)=0.80,\quad \mu_{K_1'}(D_2)=0.20,
\]
\[
\mu_{K_2'}(D_1)=0.78,\quad \mu_{K_2'}(D_2)=0.22.
\]
Then
\[
d_{TV}(\mu_{K_1'},\mu_{K_2'})
=
\frac12(0.02+0.02)
=
0.02 .
\]

If the weight function is constant, for example
\[
w(D_1)=w(D_2)=1,
\]
the weighted and unweighted distances coincide.

More generally, scaling the weights by a constant factor \(\lambda>0\) gives
\[
d_{TV}^{\lambda w} = \lambda\, d_{TV}^{w}.
\]
Therefore the corresponding similarity notion remains unchanged after rescaling the tolerance \(\epsilon\) by the same factor.
\end{ex}

%%%%%%%%%%%%%%%%%%%%%%%%%%%%%%%%%%%%%%%%%%%%%%%%%%%%%%%%%%%%%%%%%%%%%%%%%
%%%%%%%%%%%%%%%%%%%%%%%%%%%%%%%%%%%%%%%%%%%%%%%%%%%%%%%%%%%%%%%%%%%%%%%%%

\section{Probabilistic Distances and Symmetries}\label{distsym}

In this section we investigate two fundamental aspects of the probabilistic framework: distance-based similarity measures and probabilistic symmetries. Both notions are formulated in terms of the resolution distributions introduced in the previous sections.

Probabilistic distances quantify the proximity between probabilistic pseudo knots or links by comparing their induced distributions on classical resolutions. These measures allow one to detect both small probabilistic perturbations and structurally significant differences between diagrams. Probabilistic symmetries extend classical symmetry considerations to the probabilistic setting by studying invariance properties of resolution distributions under transformations such as mutation, reflection, and mirroring. In this way, classical geometric symmetries are examined through their interaction with probabilistic crossing structures.

%%%%%%%%%%%%%%%%%%%%%%%%%%%%%%%%%%%%%%%%%%%%%%%%%%%%%%%%%%%%%%%%%%%%%%%%%

\subsection{Probabilistic Gordian Distance}

The Gordian distance between two classical knots is the minimum number of crossing changes required to transform one knot into the other \cite{Mu}. We introduce a probabilistic analogue of this notion for probabilistic pseudo knots and links, obtained by averaging the classical Gordian distance over their resolution distributions.

\begin{definition}
Let $K_1$ and $K_2$ be probabilistic pseudo knots (or links). Let
\[
R_i := R(K_i)
\]
denote the finite set of classical knot (or link) types appearing among the complete resolutions of $K_i$, and let
$\mu_{K_1}$ and $\mu_{K_2}$ be their associated resolution distributions. Set
\[
R := R_1 \cup R_2,
\]
and extend both distributions to $R$ by defining $\mu_{K_i}(C)=0$ for $C\notin R_i$.

The \emph{probabilistic Gordian distance} between \(K_1\) and \(K_2\) is defined by
\[
W_G(K_1,K_2)
:=
\inf_{\gamma\in\Gamma(\mu_{K_1},\mu_{K_2})}
\sum_{C_1\in R}\sum_{C_2\in R}
\gamma(C_1,C_2)\,
d_G(C_1,C_2),
\]
where \(d_G(C_1,C_2)\) denotes the classical Gordian distance between the classical knot or link types \(C_1\) and \(C_2\), and where \(\Gamma(\mu_{K_1},\mu_{K_2})\) denotes the set of all couplings of the probability measures \(\mu_{K_1}\) and \(\mu_{K_2}\). That is, \(\gamma\) ranges over all probability distributions on \(R\times R\) whose first and second marginals are \(\mu_{K_1}\) and \(\mu_{K_2}\), respectively.
\end{definition}

Equivalently, \(W_G(K_1,K_2)\) is the minimum expected Gordian distance between paired random resolutions of \(K_1\) and \(K_2\), where the pairing is allowed to vary over all couplings of their resolution distributions. Since the resolution supports are finite, the above infimum is taken over a compact polytope and is well defined.

\begin{remark}\rm
One may also consider the \emph{independent expected Gordian discrepancy}
\[
\overline d_G^{\, \mathbb P}(K_1,K_2)
:=
\sum_{C_1\in R}\sum_{C_2\in R}
\mu_{K_1}(C_1)\,
\mu_{K_2}(C_2)\,
d_G(C_1,C_2).
\]
This quantity is the expected Gordian distance between two independent random resolutions
\[
C_1\sim\mu_{K_1},
\qquad
C_2\sim\mu_{K_2}.
\]
It is a useful descriptive statistic, but it is not a metric. In general,
\[
\overline d_G^{\,\mathbb P}(K,K)\neq 0,
\]
because two independent random resolutions of the same probabilistic pseudo knot may yield different classical knot types.
\end{remark}

\begin{ex}\rm
Consider two probabilistic pseudo knots $K_1$ and $K_2$, each with a single pre-crossing. Suppose that the positive and negative resolutions differ by a single crossing change, so that the classical Gordian distances satisfy
\[
d_G(C^{+},C^{+})=0,
\qquad
d_G(C^{-},C^{-})=0,
\qquad
d_G(C^{+},C^{-})=d_G(C^{-},C^{+})=1 .
\]

Let
\[
\mu_{K_1}(C^{+})=0.7,
\qquad
\mu_{K_1}(C^{-})=0.3,
\]
and
\[
\mu_{K_2}(C^{+})=0.6,
\qquad
\mu_{K_2}(C^{-})=0.4 .
\]

Since both distributions are supported on \(C^+\) and \(C^-\), and since
\[
d_G(C^+,C^-)=1,
\]
the optimal coupling pairs the common mass as much as possible. Thus
\[
W_G(K_1,K_2)
=
|\mu_{K_1}(C^+)-\mu_{K_2}(C^+)|
=
|0.7-0.6|
=
0.1.
\]

On the other hand, the independent expected Gordian discrepancy is
\[
\overline d_G^{\,\mathbb P}(K_1,K_2)
=
0.7\cdot0.6\cdot0
+
0.7\cdot0.4\cdot1
+
0.3\cdot0.6\cdot1
+
0.3\cdot0.4\cdot0
=
0.46 .
\]
This illustrates the difference between the optimal-transport distance \(W_G\) \cite{Villani2009} and the independent expected discrepancy \(\overline d_G^{\,\mathbb P}\).
\end{ex}

In Example~\ref{Gdistmism} we evaluate the independent expected Gordian discrepancy between probabilistic pseudo knots with partially disjoint resolution supports.

\begin{ex}\label{Gdistmism}\rm
Let
\[
R(K_1)=\{C_1^+,C_1^-\},
\qquad
\mu_{K_1}(C_1^+)=0.6,
\quad
\mu_{K_1}(C_1^-)=0.4,
\]
and
\[
R(K_2)=\{C_2^+,C_3\},
\qquad
\mu_{K_2}(C_2^+)=0.5,
\quad
\mu_{K_2}(C_3)=0.5,
\]
where $C_3$ does not appear among the resolutions of $K_1$.

Assume
\[
d_G(C_1^+,C_2^+)=1,
\qquad
d_G(C_1^+,C_3)=3,
\qquad
d_G(C_1^-,C_2^+)=2,
\qquad
d_G(C_1^-,C_3)=1 .
\]

Then
\[
\begin{aligned}
d_G^{\, \mathbb P}(K_1,K_2)
&=
0.6\cdot0.5\cdot1
+
0.6\cdot0.5\cdot3
+
0.4\cdot0.5\cdot2
+
0.4\cdot0.5\cdot1
\\
&=
0.3+0.9+0.4+0.2
=
1.8 .
\end{aligned}
\]

The larger value reflects structural differences between the resolution distributions, particularly due to the presence of the additional resolution $C_3$ in $K_2$.
\end{ex}

\begin{remark}\rm
The probabilistic Gordian distance \(W_G\) satisfies:
\begin{itemize}
\item[-] non-negativity:
\[
W_G(K_1,K_2)\ge 0;
\]

\item[-] symmetry:
\[
W_G(K_1,K_2)=W_G(K_2,K_1);
\]

\item[-] normalization:
\[
W_G(K,K)=0.
\]
\end{itemize}
Moreover, if the classical Gordian distance is a metric on the class of knot or link types under consideration, then \(W_G\) defines a metric on resolution distributions.
\end{remark}

%%%%%%%%%%%%%%%%%%%%%%%%%%%%%%%%%%%%%%%%%%%%%%%%%%%%%%%%%%%%%%%%%%%%%%%%%

\subsection{Probabilistic mutation and symmetries}

In classical knot theory, a \emph{mutation} replaces a tangle by a \(180^\circ\) rotation inside a Conway sphere~\cite{Co}. Such operations preserve certain invariants, for example the Jones polynomial, while potentially altering others. In the probabilistic setting, this idea extends naturally to probabilistic pseudo knots by allowing mutations of tangles together with transformations of pre-crossing probabilities. We distinguish two related operations.

\begin{itemize}
\item[-] A \emph{probabilistic mutation} performs a Conway mutation of a tangle while preserving the probabilities assigned to its pre-crossings.

\item[-] A \emph{flipped probabilistic mutation} performs a Conway mutation and simultaneously replaces each pre-crossing probability \(p\) by \(1-p\), modelling a reversal of local crossing preference.
\end{itemize}

\begin{definition}\rm
A \emph{probabilistic mutation} replaces a tangle \(T\) in a probabilistic pseudo knot or link by a Conway mutation of \(T\), with orientations transported along the mutation, while leaving the probabilities \(p_i\) of all pre-crossings unchanged.

A \emph{flipped probabilistic mutation} replaces a tangle \(T\) by a Conway mutation and simultaneously replaces the probability of each pre-crossing \(pc_i\in T\) by \(1-p_i\).
\end{definition}

\begin{example}\rm
Assume independence of pre-crossing resolutions. Let \(K\) contain a tangle \(T\) with two pre-crossings labelled
\[
p_1=0.6,
\qquad
p_2=0.4.
\]
For a local resolution determined by \(\sigma_1,\sigma_2\in\{0,1\}\), where \(\sigma_i=1\) denotes a positive resolution and \(\sigma_i=0\) denotes a negative resolution, the local probability weight is
\[
\mathbb P_K(\sigma_1,\sigma_2)
=
p_1^{\sigma_1}(1-p_1)^{1-\sigma_1}
p_2^{\sigma_2}(1-p_2)^{1-\sigma_2}.
\]

After a flipped probabilistic mutation, the probabilities become
\[
1-p_1=0.4,
\qquad
1-p_2=0.6.
\]
Thus the corresponding local probability weight becomes
\[
\mathbb P'_K(\sigma_1,\sigma_2)
=
(1-p_1)^{\sigma_1}p_1^{1-\sigma_1}
(1-p_2)^{\sigma_2}p_2^{1-\sigma_2}.
\]

For example, when \(\sigma_1=1\) and \(\sigma_2=0\), we obtain
\[
\mathbb P_K(1,0)
=
0.6\cdot 0.6
=
0.36,
\]
whereas
\[
\mathbb P'_K(1,0)
=
0.4\cdot 0.4
=
0.16.
\]
\end{example}

\begin{remark}\rm
Flipped probabilistic mutation redistributes probability mass among local resolution states within the mutated tangle. High-probability local states may become low-probability states and vice versa. Consequently, invariants derived from the resolution distribution, such as the spectrum or the probabilistic chirality index, may change.

If all pre-crossings in the mutated tangle satisfy \(p=\frac12\), then the flip \(p\mapsto 1-p\) does not change the probability weights of local sign assignments. However, the induced spectrum on classical knot types may still change if the Conway mutation changes the knot types arising from the corresponding complete resolutions.
\end{remark}

%%%%%%%%%%%%%%%%%%%%%%%%%%%%%%%
%%%%%%%%%%%%%%%%%%%%%%%%%%%%%%%
\subsection{On Chirality}\label{sec:chirality}

Chirality is a fundamental feature of classical knots, capturing whether a knot is equivalent to its mirror image. Invariants such as the Jones polynomial are often used to distinguish chiral from achiral knots. In the context of probabilistic pseudo knots, chirality becomes richer: uncertainty in crossing resolution can lead a diagram to probabilistically yield both chiral and achiral knots.

A key observation is that the likelihood of chiral outcomes depends not only on the diagram's structure but also on the assigned probabilities of each pre-crossing. For example, a pseudo knot with all probabilities set to \( p_i = 0.5 \) treats over- and under-crossings equally, often yielding symmetric behavior. On the other hand, an asymmetric probability distribution can bias the resolution process toward specific chiral types.

Throughout this subsection, chirality is understood in the chosen oriented category: {\it a classical knot type is called chiral if it is not equivalent to its mirror image in that category}. 

To capture this behavior, we introduce the following probabilistic invariant.

\begin{definition}\rm
Let \( K_p \) be a probabilistic pseudo knot with resolution distribution \( \mu_{K_p} \) supported on the set \( \mathcal{K}(K_p) \) of classical knot types arising among its complete resolutions. The \emph{probabilistic chirality index} is defined by
\[
\chi_p(K_p)
=
\sum_{C \in \mathcal{K}(K_p)}
\mu_{K_p}(C)\,\delta(C),
\]
where \( \delta(C)=1 \) if the classical knot type \( C \) is chiral (up to oriented ambient isotopy), and \( \delta(C)=0 \) if it is achiral.
\end{definition}

\begin{remark}\rm
Equivalently,
\[
\chi_p(K_p)=\mathbb{E}[\delta(C)],
\]
where \( C \) is a random classical resolution drawn according to the resolution distribution \( \mu_{K_p} \). Thus \( \chi_p(K_p) \) measures the probability that a random resolution of \( K_p \) is chiral. The index detects the likelihood of chirality but does not distinguish between opposite handedness.
\end{remark}

\begin{proposition}
For any probabilistic pseudo knot \( K_p \), the index satisfies
\[
0 \leq \chi_p(K_p) \leq 1 .
\]
\end{proposition}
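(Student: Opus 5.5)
The plan is to read $\chi_p(K_p)$ as a convex combination of the values $\delta(C)\in\{0,1\}$ and use the fact that $\mu_{K_p}$ is a probability measure on the finite set $\mathcal K(K_p)$. First, I will recall that $\mathcal C(K_p)$ is finite, having $2^n$ elements for $n$ pre-crossings. Hence $\mathcal K(K_p)$ is finite, and the sum defining $\chi_p$ is a finite sum with no convergence issues.

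Next, I will note that each weight $\mu_{K_p}(C)=\mathbb P_{K_p}[C]$ is a sum of products of the numbers $p_i$ and $1-p_j$. All of these lie in $[0,1]$, so $\mu_{K_p}(C)\ge 0$. Moreover, $\sum_{C}\mu_{K_p}(C)=1$. This follows from the identity $\sum_{C\in\mathcal C(K)}\mathbb P(C)=1$ stated with the probabilistic resolution tree, which is the expansion of $\prod_i\bigl(p_i+(1-p_i)\bigr)=1$, after grouping resolutions by knot type.

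For the lower bound, every term $\mu_{K_p}(C)\,\delta(C)$ is a product of nonnegative numbers, so $\chi_p(K_p)\ge 0$. For the upper bound, $\delta(C)\le 1$ gives
\[
\chi_p(K_p)\le\sum_{C\in\mathcal K(K_p)}\mu_{K_p}(C)=1 .
\]
Equivalently, $\chi_p(K_p)=\mu_{K_p}(\{C:\delta(C)=1\})$ is the measure of a subset, and this lies in $[0,1]$.

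There is no real obstacle here. The only points needing care are that $\delta$ is well defined, since chirality is a property of the knot type in the fixed oriented category, and that normalization holds for the aggregated distribution, which the grouping argument above handles.
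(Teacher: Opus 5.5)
Your proposal is correct and follows the paper's argument: the paper's proof also observes that $\chi_p(K_p)$ is a convex combination of the values $\delta(C)\in\{0,1\}$ with weights given by the probability distribution $\mu_{K_p}$. You additionally spell out finiteness, nonnegativity and normalization of the aggregated distribution, which the paper takes for granted.
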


\begin{proof}
Since \( \mu_{K_p} \) is a probability distribution and \( \delta(C)\in\{0,1\} \) for all classical knot types \( C \), the sum defining \( \chi_p(K_p) \) is a convex combination of values in the interval \( [0,1] \).
\end{proof}

\begin{remark}\rm
\begin{itemize}
\item[(i)] The index \( \chi_p(K_p) \) is invariant under pseudo-Reidemeister moves and regular isotopy, since the resolution distribution is preserved and chirality is a classical isotopy invariant.

\smallbreak

\item[(ii)] The probabilistic chirality index is not additive under connected sum. In particular, even if two probabilistic pseudo knots \( K_1 \) and \( K_2 \) have large chirality indices, the connected sum \( K_1 \# K_2 \) may admit achiral resolutions. Thus \( \chi_p \) should be interpreted as a global measure of the likelihood of chiral behavior rather than as an additive invariant.
\end{itemize}
\end{remark}

\begin{ex}[Pseudo Trefoil]\label{pptchind}\rm
Suppose a probabilistic pseudo trefoil has three pre-crossings with probabilities \( p_1 = 0.7 \), \( p_2 = 0.6 \), and \( p_3 = 0.5 \). Assume that its resolution spectrum contains:
\begin{itemize}
\item[] \( T^- \): left-handed trefoil (chiral),
\item[] \( T^+ \): right-handed trefoil (chiral),
\item[] \( U \): the unknot (achiral),
\end{itemize}
with probabilities
\[
\mu_{K_p}(T^-) = 0.21,
\qquad
\mu_{K_p}(T^+) = 0.06,
\qquad
\mu_{K_p}(U) = 0.73 .
\]

Then
\[
\chi_p(K_p)
=
0.21 + 0.06
=
0.27 .
\]

\noindent
\textbf{Interpretation.}
Although chiral outcomes occur, the pseudo knot resolves predominantly into an achiral configuration.
\end{ex}

%%%%%%%%%%%%%%%%%%%%%%%%%%%%%%%%%%%%%%%%%%%%%%%%%%%%%%%%%%%%%%%%%%%%%%%%%%%%%%%%%%%%%%%%%%%%%%%%%%%%%%%%%%%%%%%%%%%%%%%%%%%%%%%%%%%%%%%%%%%%%%%%%%%%%%%%%%%%%%%%%%

\section{Numerical Invariants of Probabilistic Pseudo Knots}\label{numinv}

Numerical invariants provide useful quantitative descriptors of knots and links and often serve as building blocks for more elaborate constructions. In this section we introduce probabilistic analogues of classical numerical quantities associated with knot and link diagrams. Some of these, such as the probabilistic writhe and the probabilistic linking number, can be interpreted as expectation-type extensions of their classical counterparts over the resolution distribution. We also introduce the probabilistic crossing tendency, a diagram-level quantity measuring the average bias of pre-crossings toward positive or negative resolutions.

%%%%%%%%%%%%%%%%%%%%%%%%%%%%%%%%%%%%%%%%%%%%%%%%%%%%%%%%%%%%%%%%%%%%%%%%%%

\subsection{The probabilistic writhe}

In the classical case, the writhe of an oriented knot or link diagram is the sum of the signs of all classical crossings, distinguishing positive and negative contributions (see Figure~\ref{si}). It is invariant under regular isotopy, but not under the Reidemeister move \(R1\).

\begin{figure}[H]
\begin{center}
\includegraphics[width=1.3in]{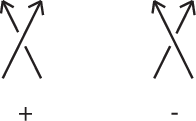}
\end{center}
\caption{The sign of the crossings.}
\label{si}
\end{figure}

In the probabilistic setting, this notion is extended by replacing the sign of a pre-crossing by its expected sign.

\begin{definition}\rm \label{dgw}
Let \(K\) be an oriented probabilistic pseudo knot diagram. Let \(C_{\mathrm{cl}}(K)\) denote the set of classical crossings of \(K\), and let \(C_{\mathrm{pre}}(K)\) denote the set of pre-crossings of \(K\). The \emph{probabilistic writhe} of \(K\) is defined by
\[
pw(K)
:=
\sum_{c\in C_{\mathrm{cl}}(K)} \operatorname{sgn}(c)
+
\sum_{pc_i\in C_{\mathrm{pre}}(K)}(2p_i-1),
\]
where \(p_i\) is the probability that the pre-crossing \(pc_i\) resolves as a positive crossing.
\end{definition}

A pre-crossing with \(p_i=\frac12\), corresponding to equal probabilities for positive and negative resolutions, contributes zero to the probabilistic writhe and is therefore neutral with respect to \(pw\). If all crossings are classical, or equivalently if all probabilistic crossings are degenerate with \(p_i=1\) for positive crossings and \(p_i=0\) for negative crossings, then \(pw(K)\) reduces to the classical writhe
\[
w(K)=\sum_i \operatorname{sgn}(c_i).
\]

\begin{remark}
Equivalently, under the independence assumption for pre-crossing resolutions, the probabilistic writhe is the expected classical writhe over the distribution of complete resolved diagrams:
\[
pw(K)
=
\sum_{r\in\mathcal C(K)}
\mathbb P(r)\,w(r),
\]
where \(\mathcal C(K)\) denotes the set of complete classical resolutions of the diagram \(K\), \(\mathbb P(r)\) is the probability of the complete resolution \(r\), and \(w(r)\) denotes the classical writhe of the resolved diagram \(r\).\end{remark}

\begin{theorem}\label{pwinv}
The probabilistic writhe is invariant under label-preserving regular isotopy of probabilistic pseudo knot diagrams.
\end{theorem}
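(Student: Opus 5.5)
Since label-preserving regular isotopy is generated by planar isotopy, the classical moves \(R2\) and \(R3\), and the pseudo moves \(PR2\) and \(PR3\) (Definition~\ref{def:prob-pseudo-regular}), it suffices to check that \(pw\) is unchanged by each generating move. The guiding observation is that \(pw\) is a sum of local contributions, one per crossing: \(\operatorname{sgn}(c)\) for a classical crossing and \(2p_i-1\) for a pre-crossing \(pc_i\). Every move is supported in a disc and fixes the diagram outside it. Outside the disc, the crossings, their signs, and their labels are untouched, and the orientations of the strands are carried along. So for each move it is enough to compare the sums of local contributions over the crossings inside the disc before and after the move.

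\textbf{Step 1 (planar isotopy and \(R3\)).} These moves induce a bijection between crossings that preserves the type (classical or pre-crossing), the sign, and the label. For \(R3\), the three classical crossings keep their signs, because the relative orientations of the two strands meeting at each crossing are unchanged.

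\textbf{Step 2 (\(R2\)).} The two crossings created or removed lie between the same pair of oriented strands, with one strand over at both. Their signs are therefore opposite, their contributions cancel, and the local sum is \(0\) on both sides.

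\textbf{Step 3 (\(PR3\)).} This move slides a strand across a pre-crossing, or moves a pre-crossing past a classical crossing, as in Figure~\ref{reid}. Each pre-crossing is transported together with its label \(p_i\), so its contribution \(2p_i-1\) is unchanged. The classical crossings correspond bijectively with unchanged signs, by the same orientation argument as in Step 1.

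\textbf{Step 4 (\(PR2\)).} This is the main point to check. The move creates or removes either a pre-crossing next to a classical crossing between the same two strands, or two pre-crossings. The orientation bookkeeping has to be done carefully for all compatible orientations, following Figure~\ref{orr}.

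The first option I would try is to read \(PR2\) in the standard way of \cite{HJMR}: it involves a pre-crossing together with a classical crossing, and the move is the one in which resolving the pre-crossing in the cancelling direction yields an \(R2\) configuration. I expect this not to give invariance for arbitrary labels: the classical crossing contributes \(\pm1\) and the pre-crossing contributes \(2p-1\), which sum to \(0\) only when \(p\) is \(0\) or \(1\). So the intended formulation must be one of the following two.

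First, \(PR2\) could create a pair of pre-crossings whose sign conventions are opposite. Because they are adjacent in an \(R2\) configuration between the same two oriented strands, resolving both so that they cancel forces the labels to correspond as \(p\) and \(1-p\). The total contribution is then \((2p-1)+(2(1-p)-1)=0\).

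Second, and more robustly, I would use the expected-writhe interpretation from the Remark after Definition~\ref{dgw}. Under the move, the complete resolutions correspond with equal probabilities, paired so that the difference in classical writhe, averaged over the local states inside the disc, is \(0\).

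I expect reconciling the precise statement of \(PR2\) with the label convention to be the main obstacle. I would settle it by fixing the convention that the moves in Figure~\ref{orr} carry labels so that the local expected sign sum vanishes, and then verifying this case by case over the orientation choices.
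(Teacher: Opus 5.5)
Your Steps 1--3 match the paper's argument. Step 4, however, has a genuine gap: you misidentify the move \(PR2\), and you then remove the apparent obstruction by changing the definitions instead of proving the statement. In the theory used here, no pseudo Reidemeister move other than \(PR1\) creates or destroys a pre-crossing. This holds in \cite{HJMR} and in the rigid-vertex moves of \cite{Kauffman1989}, which the paper says its moves coincide with locally. \(PR2\) is the rigid-vertex twist move: a classical crossing adjacent to a pre-crossing between the same two strands is carried to the other side of the pre-crossing by rotating the vertex through \(\pi\). The paper states this convention explicitly. Pseudo Reidemeister moves ``only move or rearrange the labelled pre-crossings already present in the diagram'' and impose no relations among labels. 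In the proof that the spectrum is invariant, every move except \(PR1\) induces a sign-preserving bijection on complete resolutions. Your computation \((2p-1)\pm 1\neq 0\) actually shows that a bigon-cancelling move involving a pre-crossing cannot belong to the theory. Resolving the pre-crossing in the non-cancelling direction produces a clasp, so such a move would not even preserve \(\mathrm{Spec}\).

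Neither of your proposed repairs works. Option (a) posits a move creating two pre-crossings whose labels are forced to be \(p\) and \(1-p\). No such move exists, and labels are independent data that nothing forces to be complementary. Even granting the move, the same-sign resolutions, with total probability \(2p(1-p)\), give an irremovable clasp. It would also leave the actual \(PR2\) unchecked. Option (b) needs a probability- and sign-preserving bijection of resolutions, which a move deleting a pre-crossing cannot supply: the number of resolutions halves, and the local expected writhe drops from \((2p-1)+\operatorname{sgn}(c)\) to \(0\). Declaring a convention under which ``the local expected sign sum vanishes'' simply assumes the conclusion. The missing fact is that \(PR2\), like \(PR3\), transports the pre-crossing together with its label \(p_i\) and does not change the sign of the adjacent classical crossing. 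The rotation is orientation-preserving, and the two strands have the same relative orientation and twist handedness on both sides of the vertex. Hence every term \(2p_i-1\) and every \(\operatorname{sgn}(c)\) is preserved, which is exactly the paper's short proof.
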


\begin{proof}
The classical part of the writhe is invariant under the classical Reidemeister moves \(R2\) and \(R3\). Indeed, \(R2\) creates or removes one positive and one negative crossing, whose contributions cancel, while \(R3\) merely rearranges crossings without changing their signs.

For the probabilistic part, label-preserving pseudo Reidemeister moves transport pre-crossings together with their probability labels. Thus the terms \(2p_i-1\) associated with the pre-crossings are preserved. Consequently the total sum defining \(pw(K)\) remains unchanged under label-preserving regular isotopy.
\end{proof}

\begin{theorem}[Stability of probabilistic writhe]
Let \(K_1\) and \(K_2\) be probabilistic pseudo knot diagrams, and let \(\nu_{K_1}\) and \(\nu_{K_2}\) denote their probability distributions on complete resolved diagrams, extended by zero to a common finite support
\[
R:=\mathcal C(K_1)\cup\mathcal C(K_2).
\]
Suppose there exists a constant \(M\ge 0\) such that
\[
|w(r)|\le M
\qquad
\text{for all } r\in R.
\]
If
\[
d_{TV}(\nu_{K_1},\nu_{K_2})\le \varepsilon,
\]
then
\[
|pw(K_1)-pw(K_2)|\le 2M\varepsilon.
\]
\end{theorem}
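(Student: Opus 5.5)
The plan is to express both probabilistic writhes as expectations against the resolution distributions and then use the standard fact that integrating a bounded function against the difference of two probability measures is controlled by their total variation distance.

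\textbf{Step 1: expectation formula.} By the remark following Definition~\ref{dgw}, under the independence assumption,
\[
pw(K_j)=\sum_{r\in\mathcal C(K_j)}\nu_{K_j}(r)\,w(r),\qquad j=1,2.
\]
Since each $\nu_{K_j}$ is extended by zero to the common finite support $R$, both sums can be taken over $R$. Subtracting gives
\[
pw(K_1)-pw(K_2)=\sum_{r\in R}\bigl(\nu_{K_1}(r)-\nu_{K_2}(r)\bigr)\,w(r).
\]
Here $w(r)$ is the classical writhe of the resolved diagram $r$. This is well defined even when $r$ lies in $\mathcal C(K_1)\cup\mathcal C(K_2)$, because $R$ is a set of resolved diagrams, not of knot types.

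\textbf{Step 2: triangle inequality and the bound on $w$.} Taking absolute values and using $|w(r)|\le M$ on $R$ gives
\[
|pw(K_1)-pw(K_2)|\le M\sum_{r\in R}\bigl|\nu_{K_1}(r)-\nu_{K_2}(r)\bigr| = 2M\,d_{TV}(\nu_{K_1},\nu_{K_2}).
\]
The last equality uses the definition of total variation distance as one half of the $\ell^1$ distance, applied here to distributions on resolved diagrams as in Definition~\ref{tvd}. The hypothesis $d_{TV}\le\varepsilon$ then yields the claimed bound $2M\varepsilon$.

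\textbf{Step 3 (optional sharpening).} The constant could be improved to $M\varepsilon$ by centering. Since $\sum_r(\nu_{K_1}-\nu_{K_2})(r)=0$, one may subtract any constant from $w$; choosing the midpoint of the range of $w$ on $R$ gives a centered function bounded by $M$. This refinement is not needed for the statement as given.

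\textbf{Main obstacle.} The only delicate point is Step 1: identifying $pw$ with the expectation of $w$ over resolved diagrams rather than over knot types. Writhe is not a knot-type invariant, so the comparison must be made using the distributions $\nu_{K_j}$ on diagrams, and the theorem is set up precisely with these. If the expectation remark needs justification, it follows from linearity of expectation. The writhe of a resolution equals the classical contribution plus $\sum_i X_i$, where $X_i=\pm1$ according to the resolution of $pc_i$, and $\mathbb E[X_i]=2p_i-1$. Everything after that is the routine bounded-function estimate.
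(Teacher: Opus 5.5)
Your Steps 1--2 are correct and are essentially the paper's proof. You write both probabilistic writhes as expectations of $w$ against $\nu_{K_1},\nu_{K_2}$ on the common support $R$, apply the triangle inequality with $|w|\le M$, and use $\sum_{r\in R}|\nu_{K_1}(r)-\nu_{K_2}(r)|=2\,d_{TV}(\nu_{K_1},\nu_{K_2})$.

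The optional Step 3, however, is false as stated. Centering at $c=\tfrac12(\max_R w+\min_R w)$ does give $|w-c|\le \tfrac12(\max_R w-\min_R w)\le M$. But the factor $2$ coming from $\sum_r|\nu_{K_1}(r)-\nu_{K_2}(r)|=2\,d_{TV}$ is still present, so what you actually obtain is the oscillation bound
\[
|pw(K_1)-pw(K_2)|\le \Bigl(\max_{r\in R}w(r)-\min_{r\in R}w(r)\Bigr)\,\varepsilon\le 2M\varepsilon ,
\]
not $M\varepsilon$. The constant $2M$ is in fact sharp. Take $K_1$ and $K_2$ to be the same one-crossing kink diagram of the unknot, with its crossing a pre-crossing labelled $p=1$ in $K_1$ and $p=0$ in $K_2$. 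Then $\nu_{K_1}=\delta_{r_+}$ and $\nu_{K_2}=\delta_{r_-}$ with $w(r_\pm)=\pm1$, so $M=1$ and $d_{TV}(\nu_{K_1},\nu_{K_2})=1=\varepsilon$. Yet $|pw(K_1)-pw(K_2)|=2>M\varepsilon$.

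Step 3 is not used, so your proof of the stated theorem is unaffected, but you should drop or correct that remark.
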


\begin{proof}
Using the expectation formula,
\[
pw(K)=\sum_{r\in R}\nu_K(r)\,w(r).
\]
Therefore
\[
|pw(K_1)-pw(K_2)|
=
\left|
\sum_{r\in R}
\big(\nu_{K_1}(r)-\nu_{K_2}(r)\big)w(r)
\right|
\le
\sum_{r\in R}
|\nu_{K_1}(r)-\nu_{K_2}(r)|\,|w(r)|.
\]
Since \(|w(r)|\le M\) for all \(r\in R\), we obtain
\[
|pw(K_1)-pw(K_2)|
\le
M\sum_{r\in R}
|\nu_{K_1}(r)-\nu_{K_2}(r)|
=
2M\,d_{TV}(\nu_{K_1},\nu_{K_2})
\le 2M\varepsilon.
\]
\end{proof}

\begin{remark}\rm
The distinction between \(\nu_K\) and \(\mu_K\) is important. The distribution \(\mu_K\) is defined on classical knot types, while \(\nu_K\) is defined on complete resolved diagrams. Since writhe is a regular isotopy invariant of diagrams rather than an ambient isotopy invariant of knot types, the expectation and stability statements for \(pw\) must be formulated using \(\nu_K\), not the knot-type spectrum \(\mu_K\).
\end{remark}

%%%%%%%%%%%%%%%%%%%%%%%%%%%%%%%%%%%%%%%%%%%%%%%%%%%%%%%%%%%%%%%%%%%%%%%%%%

\subsection{The probabilistic linking number}

The linking number in classical knot theory measures how two components of an oriented link are interlinked. For two components \(L_i\) and \(L_j\), it is computed as one half of the sum of the signs of the crossings between these two components. In the probabilistic framework, the linking number must account for uncertainty in crossing signs, while still considering only crossings involving distinct components.

\begin{definition}\rm \label{dpl}
Let \(L\) be an oriented probabilistic pseudo link with components
\[
L_1,L_2,\ldots,L_n.
\]
For \(i\neq j\), let \(C(L_i,L_j)\) denote the set of crossings between strands of \(L_i\) and \(L_j\). Classical crossings are assigned probabilities \(p_k=1\) or \(p_k=0\) according to whether they are positive or negative.

The \emph{probabilistic linking number} between \(L_i\) and \(L_j\) is defined by
\[
pl_{ij}(L)
:=
\frac12
\sum_{c_k\in C(L_i,L_j)}
(2p_k-1),
\]
where \(p_k\) denotes the probability that the crossing \(c_k\) is positive.
\end{definition}

For a two-component link, we simply write \(pl(L)=pl_{12}(L)\). More generally, one may define the total probabilistic linking number by
\[
pl_{\mathrm{tot}}(L)
:=
\sum_{1\le i<j\le n} pl_{ij}(L).
\]

\begin{remark}\rm
\begin{itemize}
\item[(i)] If \(L\) is a classical oriented link, then \(pl_{ij}(L)\) coincides with the classical linking number \(\ell k(L_i,L_j)\).

\item[(ii)] Crossings with \(p_k=\frac12\) do not contribute to \(pl_{ij}(L)\) and are therefore neutral with respect to the probabilistic linking number.
\end{itemize}
\end{remark}

\begin{theorem}\label{plinv}
The probabilistic linking numbers \(pl_{ij}(L)\) are invariant under oriented, label-preserving ambient isotopy of probabilistic pseudo links.
\end{theorem}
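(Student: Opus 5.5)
By the Reidemeister theorem for pseudo links, oriented label-preserving ambient isotopy is generated by planar isotopy, the classical moves \(R1\), \(R2\), \(R3\) and the pseudo moves \(PR1\), \(PR2\), \(PR3\), with probability labels transported along the moves. So it suffices to check that each generating move leaves \(pl_{ij}(L)\) unchanged for every fixed pair \(i\neq j\). Since \(pl_{ij}(L)\) is a sum of local contributions \(\tfrac12(2p_k-1)\) over inter-component crossings, I only need to compare the multiset of labelled inter-component crossings before and after each move. Here classical crossings carry \(p_k\in\{0,1\}\), and \(2p_k-1\) is then \(\operatorname{sgn}(c_k)\).

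\textbf{Planar isotopy and \(R3\)/\(PR3\).} Planar isotopy does not change crossings at all. An \(R3\) or \(PR3\) move only slides a strand past a crossing. It gives a bijection between the three crossings before and after, preserving which pair of components meet, the orientations and signs of classical crossings, and (by label transport) the label of each pre-crossing. Hence each term \(2p_k-1\) is preserved.

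\textbf{\(R1\) and \(PR1\).} The crossing created or removed is a self-crossing of a single component. It lies in no \(C(L_i,L_j)\) with \(i\ne j\), so these moves do not affect \(pl_{ij}\). This is the key reason invariance holds under full ambient isotopy rather than only regular isotopy, in contrast with Theorem~\ref{pwinv}.

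\textbf{\(R2\) and \(PR2\).} For classical \(R2\), the two crossings involve the same pair of strands with opposite signs. If the strands belong to \(L_i\) and \(L_j\), their contributions \(+1\) and \(-1\) cancel; otherwise they do not enter \(C(L_i,L_j)\). The step needing most care is \(PR2\), where the two crossings are pre-crossings, or one classical and one pre-crossing. For it to change \(pl_{ij}\) at all, the two strands must belong to \(L_i\) and \(L_j\). The move then adds or removes crossings whose contributions must sum to zero. In the pseudo setting this sum is \((2p-1)+(2q-1)\), which vanishes only if \(q=1-p\).

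I therefore expect the main obstacle to be fixing what label-preserving means for \(PR2\). For oriented strands the two crossings of a bigon have opposite signs in any resolution, so the positive resolution of one corresponds to the negative of the other. The natural convention is that a \(PR2\) pair carries labels \(p\) and \(1-p\), consistently with how the spectrum proposition treats such pairs: resolutions with mismatched resolved signs do not cancel. Equivalently, the pair is a single rigid configuration whose resolutions cancel. I would state this convention explicitly, matching the paper's transport-of-labels rule and the \(R2\) case via the degenerate labels \(1\) and \(0\). With it, the contributions cancel and the invariance follows by summing over all moves.
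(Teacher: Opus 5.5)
Your handling of planar isotopy, $R1$/$PR1$, classical $R2$, and $R3$/$PR3$ agrees with the paper's move-by-move argument. The $PR2$ step, however, rests on a misreading of the move, and the convention you propose to repair it is not available. No pseudo Reidemeister move other than $PR1$ creates or destroys a pre-crossing. $PR2$ is the rigid-vertex flip: a classical crossing and a pre-crossing on the same two strands exchange places along their bigon (the pseudo braid relation $\sigma_i p_i=p_i\sigma_i$). $PR3$ slides a strand over or under a pre-crossing. The paper relies on this explicitly. The spectrum proposition says every move except $PR1$ induces a bijection of complete resolutions, and the set-up stipulates that pseudo Reidemeister moves impose no algebraic relations among labels; they only move or rearrange the labelled pre-crossings already present. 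A move cancelling a bigon that contains a pre-crossing could not belong to the theory. The resolution in which the two crossings have different strands on top is a clasp, not an $R2$ pair. For the same reason, your claim that the two crossings of a bigon have opposite signs in every resolution is false: the clasp has equal signs.

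Imposing labels $p$ and $1-p$ on such a pair does not rescue this. It contradicts the label-transport rule that defines the equivalence relation in the statement. Moreover, under the independence assumption the pair still resolves to a clasp with probability $2p(1-p)>0$ for $0<p<1$, so such a ``move'' would not even preserve the spectrum. Instead, put $PR2$ in the same case as $R3$ and $PR3$. It permutes the crossings involved and preserves, for each of them, the pair of components it joins, the sign of each classical crossing (the flip is a rotation of the rigid vertex, hence sign-preserving), and the label of each pre-crossing. Hence every term $2p_k-1$ in $pl_{ij}(L)$ is unchanged with no extra convention, and your argument then coincides with the paper's.
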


\begin{proof}
The classical linking number is invariant under oriented ambient isotopy. It remains to check that the probabilistic contributions behave correctly under the corresponding pseudo-Reidemeister moves.

Moves \(R1\) and \(PR1\) create or eliminate self-crossings within a single component and therefore do not affect the sets \(C(L_i,L_j)\) for \(i\neq j\). The move \(R2\), when it involves two distinct components, creates or removes one positive and one negative crossing between the same pair of components, and their contributions cancel. The move \(R3\) rearranges crossings without changing their signs. The pseudo-Reidemeister moves preserve the labelled pre-crossings and transport their probability labels along the isotopy. Hence the quantities \(pl_{ij}(L)\) are preserved.
\end{proof}

\begin{ex}\rm
Consider a probabilistic pseudo Hopf link \(L\) with two components \(L_1\) and \(L_2\), containing one classical positive crossing and one pre-crossing \(pc_1\) between them, where the pre-crossing has probability \(p_1=0.8\), as illustrated in Figure~\ref{pHopf}.

\begin{figure}[H]
\begin{center}
\includegraphics[width=1.4in]{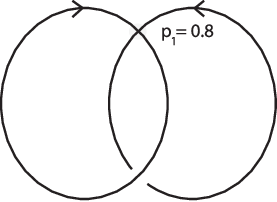}
\end{center}
\caption{A probabilistic pseudo Hopf link.}
\label{pHopf}
\end{figure}

The probabilistic linking number is therefore
\[
pl(L)
=
\frac12\big((2\cdot0.8-1)+(2\cdot1-1)\big)
=
0.8 .
\]

Thus the probabilistic linking number reflects a tendency toward positive linking, consistent with the higher likelihood of a positive crossing.
\end{ex}

Equivalently, under the independence assumption for pre-crossing resolutions, the probabilistic linking number admits an interpretation as the expected classical linking number over the resolution distribution:
\[
pl_{ij}(L)
=
\sum_{C\in R(L)}
\mu_L(C)\,\ell k_{ij}(C),
\]
where \(\ell k_{ij}(C)\) denotes the classical linking number between the \(i\)-th and \(j\)-th components of the resolved oriented link \(C\).

\begin{theorem}[Stability of probabilistic linking number]
Let \(L_1\) and \(L_2\) be probabilistic pseudo links with resolution distributions
\(\mu_{L_1}\) and \(\mu_{L_2}\) supported on the finite set
\[
R:=R(L_1)\cup R(L_2),
\]
extended by zero outside their supports. Let \(\ell k_{ij}(C)\) denote the classical linking number between the corresponding components of a resolved oriented link \(C\in R\).

If
\[
d_{TV}(\mu_{L_1},\mu_{L_2})\le \varepsilon,
\]
then
\[
|pl_{ij}(L_1)-pl_{ij}(L_2)|\le 2M\varepsilon,
\]
where
\[
M:=\max_{C\in R}|\ell k_{ij}(C)|.
\]
\end{theorem}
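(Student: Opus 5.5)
The proof will follow the stability argument for the probabilistic writhe. The new ingredient is that here the distributions live on classical link types rather than on resolved diagrams. The first step is to use the expectation formula stated just before the theorem. Under the independence assumption,
\[
pl_{ij}(L)=\sum_{C\in R(L)}\mu_L(C)\,\ell k_{ij}(C).
\]
To justify it, note that each crossing between \(L_i\) and \(L_j\) contributes \(\tfrac12(2p_k-1)\) to \(pl_{ij}(L)\). This is exactly the expected value of half its random sign. By linearity of expectation, \(pl_{ij}(L)\) is the expected classical linking number of a random complete resolution.

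This expectation is first a sum over resolved diagrams. Because \(\ell k_{ij}\) is an ambient isotopy invariant, it is constant on each link type. So the sum over diagrams collapses to a sum over link types weighted by \(\mu_L\). This is the key difference from the writhe case, where the paper had to work with \(\nu_K\) instead.

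The second step is to extend both \(\mu_{L_1}\) and \(\mu_{L_2}\) by zero to the common finite set \(R\). Each linking number \(pl_{ij}(L_s)\) is then a sum over \(R\), and subtracting gives
\[
|pl_{ij}(L_1)-pl_{ij}(L_2)|=\Big|\sum_{C\in R}\big(\mu_{L_1}(C)-\mu_{L_2}(C)\big)\ell k_{ij}(C)\Big|.
\]
Applying the triangle inequality and then \(|\ell k_{ij}(C)|\le M\) bounds this by
\[
M\sum_{C\in R}|\mu_{L_1}(C)-\mu_{L_2}(C)|=2M\,d_{TV}(\mu_{L_1},\mu_{L_2})\le 2M\varepsilon,
\]
using Definition~\ref{tvd}.

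The main obstacle is making \(\ell k_{ij}(C)\) well defined as a function on link types. This requires components to be labelled consistently across resolutions and across \(L_1\) and \(L_2\). I will treat elements of \(R\) as oriented link types with ordered, labelled components. Resolving pre-crossings never changes which component a strand belongs to, so the labels of \(L_1\) and \(L_2\) are inherited by all their resolutions.

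With that convention, \(\ell k_{ij}\) is an isotopy invariant of labelled oriented links, and the collapse from diagrams to types is legitimate. Finiteness of \(R\) ensures \(M\) is finite. The bound could be sharpened to \(\max\ell k-\min\ell k\) via the centred form of the total variation inequality, but that is not needed here.
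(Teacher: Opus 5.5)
Your proposal is correct and matches the paper's proof. Both write $pl_{ij}$ as the $\mu_L$-expectation of $\ell k_{ij}$ over the common finite support $R$, subtract, apply the triangle inequality with $|\ell k_{ij}(C)|\le M$, and use $\sum_{C\in R}|\mu_{L_1}(C)-\mu_{L_2}(C)|=2\,d_{TV}(\mu_{L_1},\mu_{L_2})$. Your added justification of the expectation formula is sound and fills in what the paper simply cites as ``by definition'': linearity of expectation over the crossing signs (which in fact does not need independence), then pushing forward from resolved diagrams to labelled oriented link types.
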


\begin{proof}
By definition,
\[
pl_{ij}(L)=\sum_{C\in R}\mu_L(C)\,\ell k_{ij}(C).
\]
Hence,
\[
|pl_{ij}(L_1)-pl_{ij}(L_2)|
=
\left|
\sum_{C\in R}
(\mu_{L_1}(C)-\mu_{L_2}(C))\,\ell k_{ij}(C)
\right|
\le
\sum_{C\in R}
|\mu_{L_1}(C)-\mu_{L_2}(C)|\,|\ell k_{ij}(C)|.
\]
Since \(|\ell k_{ij}(C)|\le M\) for all \(C\in R\), we obtain
\[
|pl_{ij}(L_1)-pl_{ij}(L_2)|
\le
M\sum_{C\in R}
|\mu_{L_1}(C)-\mu_{L_2}(C)|
=
2M\,d_{TV}(\mu_{L_1},\mu_{L_2})
\le
2M\varepsilon.
\]
\end{proof}

\begin{remark}\rm
Since the resolution set \(R\) is finite, the constant
\[
M=\max_{C\in R}|\ell k_{ij}(C)|
\]
always exists. The preceding theorem therefore shows that the probabilistic linking number varies at most linearly with the total variation distance between resolution distributions. In other words, \(pl_{ij}\) is Lipschitz continuous with Lipschitz constant \(2M\) on this finite resolution set.
\end{remark}

%%%%%%%%%%%%%%%%%%%%%%%%%%%%%%%%%%%%%%%%%%%%%%%%%%%%%%%%%%%%%%%%%%%%%%%%%%

\subsection{Probabilistic crossing tendency}

The probability labels on the pre-crossings of a probabilistic pseudo knot encode local bias toward positive or negative resolutions. In this subsection we introduce the \emph{probabilistic crossing tendency}, a normalized diagram-level quantity measuring the average inclination of the pre-crossings toward positive resolutions.

\begin{definition}\label{tend}\rm
Let \(K\) be a probabilistic pseudo knot with \(n\geq 1\) pre-crossings labelled
\[
p_1,p_2,\ldots,p_n,
\]
where each \(p_i\in[0,1]\) denotes the probability that the \(i\)-th pre-crossing resolves as positive. The \emph{normalized probabilistic crossing tendency} of \(K\), denoted \(pcp(K)\), is defined by
\[
pcp(K)=\frac{1}{n}\sum_{i=1}^{n}p_i.
\]
\end{definition}

The sum in Definition~\ref{tend} ranges only over pre-crossings. Classical crossings, whose signs are fixed, are omitted so that \(pcp(K)\) measures only the probabilistic part of the diagram.

\begin{remark}\rm
\begin{itemize}
\item[(i)] The probabilistic crossing tendency satisfies
\[
0\le pcp(K)\le 1.
\]

\item[(ii)] The quantity \(pcp(K)\) represents the expected proportion of positive pre-crossings in a random resolution of \(K\). In particular:
\begin{itemize}
\item if \(pcp(K)>\frac12\), the pre-crossings are biased toward positive resolutions;

\item if \(pcp(K)<\frac12\), the pre-crossings are biased toward negative resolutions;

\item if \(pcp(K)=\frac12\), the pre-crossings are balanced on average.
\end{itemize}

Pre-crossings with \(p_i=\frac12\) are neutral with respect to this quantity.
\end{itemize}
\end{remark}

\begin{theorem}
The probabilistic crossing tendency is invariant under label-preserving regular isotopy.
\end{theorem}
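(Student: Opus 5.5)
The plan is to check invariance move by move against the generators of label-preserving regular isotopy in Definition~\ref{def:prob-pseudo-regular}: planar isotopy \(R0\), the classical moves \(R2\) and \(R3\), and the pseudo moves \(PR2\) and \(PR3\), with probability labels transported along each move. Since \(pcp(K)=\frac1n\sum_{i=1}^n p_i\) depends only on the multiset \(\{p_1,\ldots,p_n\}\) of labels on pre-crossings, it suffices to show that each generating move induces a label-preserving bijection between the pre-crossings of the two diagrams. Then the number \(n\) of pre-crossings and the sum \(\sum_i p_i\) are both unchanged, and so is their quotient.

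First, \(R0\), \(R2\) and \(R3\) involve only classical crossings. They create, delete or rearrange classical crossings, but they do not touch any pre-crossing, and classical crossings are explicitly excluded from the sum in Definition~\ref{tend}. So for these moves the set of labelled pre-crossings is literally unchanged. Next, \(PR3\) moves a strand past a pre-crossing (or rearranges the crossings of a triangle containing one pre-crossing). By the convention stated before Definition~\ref{def:prob-pseudo-regular}, each labelled pre-crossing is carried through the move together with its label, which gives the required bijection.

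The step needing care is \(PR2\), since that move as drawn in Figure~\ref{reid} involves a pair of pre-crossings (or a pre-crossing together with a classical crossing, depending on the variant). If it only slides or rearranges existing labelled pre-crossings, the bijection argument again applies. If instead some version creates or annihilates a pair of pre-crossings, I would appeal to the paper's convention that pseudo moves ``do not impose algebraic relations among probability labels; they only move or rearrange the labelled pre-crossings already present.'' Under this convention such a move is a label-preserving rearrangement, so the multiset of labels is again preserved. I expect this to be the only point that is not purely formal: the argument relies on this interpretation of \(PR2\), exactly as the proof of Theorem~\ref{pwinv} does for the terms \(2p_i-1\).

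Finally, I would note the hypothesis \(n\ge 1\) in Definition~\ref{tend}. Because every move preserves \(n\), the normalization is well defined on both sides, and \(pcp\) is constant on label-preserving regular isotopy classes. I would also remark that \(R1\) is harmless, since it involves only a classical crossing. The exclusion of \(PR1\) is essential, however: a \(PR1\) move would add a pre-crossing with an arbitrary label and could change the average.
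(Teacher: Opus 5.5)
Your proposal is correct and is essentially the paper's own argument: the paper simply notes that, under label-preserving regular isotopy, the moves $PR2$ and $PR3$ transport the labelled pre-crossings without altering them, so $n$ and $\sum_i p_i$ (hence $pcp(K)$) are unchanged. Your hedge about a version of $PR2$ that creates or annihilates pre-crossings is unnecessary, and the labelling convention could not rescue such a move anyway, since it would change $n$ and the sum; in fact $PR2$, like its rigid-vertex counterpart, only moves a classical crossing past a pre-crossing on the same two strands, so that case never arises.
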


\begin{proof}
Under label-preserving regular isotopy, the pre-crossings are transported together with their assigned probabilities. In particular, the pseudo Reidemeister moves \(PR2\) and \(PR3\) preserve the set of labelled pre-crossings and do not alter their probability labels. Consequently, both the sum
\[
\sum_{i=1}^{n}p_i
\]
and the number \(n\) of pre-crossings remain unchanged. Hence \(pcp(K)\) is preserved.
\end{proof}

\begin{ex}\rm
Consider a probabilistic pseudo knot \(K\) with three pre-crossings labelled
\[
p_1=0.8,\qquad p_2=0.6,\qquad p_3=0.4.
\]
Then
\[
pcp(K)=\frac{0.8+0.6+0.4}{3}=0.6,
\]
indicating an average tendency toward positive resolutions among the pre-crossings.
\end{ex}

\begin{theorem}[Probabilistic crossing tendency is not stable]
The probabilistic crossing tendency
\[
pcp(K)=\frac{1}{n}\sum_{i=1}^{n}p_i
\]
is invariant under label-preserving regular isotopy, but it is not stable under probabilistic equivalence defined via total variation distance on resolution distributions. In particular, for any \(\varepsilon>0\), there exist probabilistically equivalent pseudo knots \(K_1\sim_{\mathbb P}K_2\) such that
\[
d_{TV}(\mu_{K_1},\mu_{K_2})<\varepsilon
\quad\text{while}\quad
|pcp(K_1)-pcp(K_2)|=1.
\]
\end{theorem}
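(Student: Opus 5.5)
The invariance half has already been established in the preceding theorem, so only the failure of stability needs an argument. I will give an explicit family of examples in which the resolution distributions on knot types coincide exactly, so that \(d_{TV}(\mu_{K_1},\mu_{K_2})=0<\varepsilon\) for every \(\varepsilon>0\), while the labels sit at opposite extremes of \([0,1]\). The guiding observation is that \(\mu_K\) only records the aggregated distribution on knot types, whereas \(pcp\) records the raw labels. Any diagram whose positive and negative resolutions give the same knot type therefore makes the labels invisible to \(\mu_K\).

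\textbf{Construction.} Let \(K_1\) be the standard diagram of the unknot with a single kink, that is, a single self-crossing as produced by a \(PR1\) move, and make this crossing a pre-crossing with label \(p=1\). Let \(K_2\) be the same diagram with label \(p=0\). Then \(n=1\) in both cases, so \(pcp(K_1)=1\) and \(pcp(K_2)=0\), and hence \(|pcp(K_1)-pcp(K_2)|=1\).

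\textbf{Distributions.} The unique resolution of \(K_1\) that occurs with positive probability is a positive kink, and that of \(K_2\) is a negative kink. By \(R1\), both are the unknot. Therefore \(\mu_{K_1}=\mu_{K_2}=\delta_{\text{unknot}}\), so \(d_{TV}=0\). This also gives \(K_1\sim_{\mathbb P}K_2\) for every tolerance \(\epsilon\ge 0\). The same conclusion follows from the earlier proposition on invariance of \(\mathrm{Spec}\) under \(PR1\) with an arbitrary label.

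\textbf{Robustness and the main obstacle.} One might worry that a degenerate label \(p\in\{0,1\}\) makes the pre-crossing effectively classical. Two further constructions address this.
\begin{itemize}
\item \emph{Nondegenerate labels.} Take \(m\) such kinks with labels \(p_i=1-\delta\) in \(K_1\) and \(p_i=\delta\) in \(K_2\). Then \(d_{TV}=0\) still holds and \(|pcp(K_1)-pcp(K_2)|=1-2\delta\), so the gap can be made arbitrarily close to \(1\) with nondegenerate labels.
\item \emph{A knot-type example.} Take a connected sum of a trefoil with a labelled kink. This shows the phenomenon occurs even when nontrivial knot types are present.
\end{itemize}
The statement with the exact value \(1\) is met by the degenerate labels. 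The only genuine point to check is that \(K_1\) and \(K_2\) are legitimate probabilistic pseudo knots with \(n\ge1\), which holds because labels \(0\) and \(1\) are allowed in Definition~\ref{ppk}.
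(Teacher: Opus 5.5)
Your proposal is correct and is essentially the paper's argument. The paper takes completely probabilistic diagrams with \(N\) nugatory pre-crossings, labelled \(0\) throughout in \(K_1\) and \(1\) throughout in \(K_2\), so that every resolution is the unknot, \(d_{TV}=0\), and \(|pcp(K_1)-pcp(K_2)|=1\); your single labelled kink is the case \(N=1\), and, like you, the paper relies on the preceding theorem for the invariance half rather than re-proving it.
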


\begin{proof}
Fix \(N\ge1\). Let \(K_1\) and \(K_2\) be completely probabilistic pseudo knot diagrams with \(N\) nugatory pre-crossings arranged so that every complete resolution yields the unknot. Then both resolution distributions coincide and are supported entirely on the unknot. Hence
\[
d_{TV}(\mu_{K_1},\mu_{K_2})=0,
\]
and therefore \(K_1\sim_{\mathbb P}K_2\) for every \(\varepsilon>0\).

Choose probabilities so that \(p_i=0\) for all pre-crossings of \(K_1\), and \(p_i=1\) for all pre-crossings of \(K_2\). Then
\[
pcp(K_1)=0
\qquad\text{and}\qquad
pcp(K_2)=1,
\]
and therefore
\[
|pcp(K_1)-pcp(K_2)|=1.
\]
\end{proof}

\begin{remark}\rm
This phenomenon reflects the fact that probabilistic crossing tendency measures local probabilistic bias in a labelled diagram, whereas probabilistic equivalence compares only the induced distributions on resolved knot types. Consequently, \(pcp\) captures diagram-level information that is intentionally invisible to probabilistic equivalence.
\end{remark}

%%%%%%%%%%%%%%%%%%%%%%%%%%%%%%%%%%%%%%%%%%%%%%%%%%%%%%%%%%%%%%%%%%%%%%%%%%%%%%%%%%%%%%%%%%%%%%%%%%%%%%%%%%%%%%%%%%%%%%%%%%%%%%%%%%%%%%%%%%%%%%%%%%%%%%%%%%%%%%%%%%%%%%%%%%%%%%%%%%%%%%%%%%%%%%%%%%%%%%%%%%%%%%%%%%%%%%%%%%%%%%%%%%%%%%%%%%%

\section{Polynomial Invariants}\label{sec:polyinv}

Polynomial invariants constitute some of the most powerful tools in knot theory, providing algebraic encodings of topological and diagrammatic structure. In the classical setting, invariants such as the Alexander polynomial \cite{Alexander}, the Jones polynomial \cite{Jones}, and the Kauffman bracket \cites{Kauffman,LK,LK1} capture important information about knots and links, including chirality, equivalence, and diagrammatic complexity. While the Alexander and Jones polynomials are ambient isotopy invariants of classical knots and links, the Kauffman bracket is naturally a regular isotopy invariant of diagrams and becomes an ambient isotopy invariant only after the usual writhe normalization.

In the probabilistic framework, these constructions must be adapted to accommodate the uncertainty inherent in probabilistic pseudo knots. Pre-crossings carry probabilistic data rather than fixed over--under information, and polynomial constructions may therefore be extended by taking probability-weighted contributions over the possible classical resolutions. The resulting objects encode not only the topology of the resolved diagrams, but also the probability distribution induced by the uncertain crossing data.

In this section we introduce probabilistic polynomial invariants, beginning with a weighted probabilistic Kauffman bracket, followed by expectation-type probabilistic quantum invariants such as Jones-type invariants.

%%%%%%%%%%%%%%%%%%%%%%%%%%%%%%%%%%%%%%%%%%%%%%%%%%%%%%%%%%%%%%%%%%%%%%%%%%

\subsection{The probabilistic bracket polynomial}

The Kauffman bracket is a classical regular isotopy invariant of unoriented knot and link diagrams, defined by skein relations and the loop value. It encodes important diagrammatic information and becomes the Jones polynomial after the usual writhe normalization. For probabilistic pseudo knots, the Kauffman bracket construction extends naturally by incorporating the probabilities assigned to pre-crossings, thereby averaging the brackets of the possible classical resolutions. This construction may be viewed as a probabilistic version of the tangle insertion technique introduced by Henrich and Kauffman~\cite{HenrichKauffman2017} for pseudoknots, singular knots, and rigid vertex spatial graphs.

\begin{definition}\rm
Let \(K\) be a probabilistic pseudo knot diagram with pre-crossings
\[
pc_1,pc_2,\ldots,pc_n,
\]
where each \(pc_i\) is decorated with a probability \(p_i\in[0,1]\). The \emph{weighted probabilistic bracket polynomial} \(\langle K\rangle_{\mathbb P}\) is defined recursively by the following rules.

\begin{itemize}
\item[1.] \textbf{Probabilistic skein relation.}
At a pre-crossing \(pc_i\), we set
\[
\langle K\rangle_{\mathbb P}
=
p_i\langle K_+\rangle_{\mathbb P}
+
(1-p_i)\langle K_-\rangle_{\mathbb P},
\]
where \(K_+\) and \(K_-\) denote the diagrams obtained by resolving \(pc_i\) as a positive and negative classical crossing, respectively.

\item[2.] \textbf{Normalization.}
\[
\langle {\rm O} \rangle_{\mathbb P}=1,
\]
where \({\rm O}\) denotes the unknot.

\item[3.] \textbf{Loop relation.}
\[
\langle D\cup {\rm O}\rangle_{\mathbb P}
=
(-A^2-A^{-2})\langle D\rangle_{\mathbb P}.
\]

\item[4.] \textbf{Classical skein relation.}
For a classical crossing,
\[
\Bigl\langle
\raisebox{-3pt}{\includegraphics[scale=0.6]{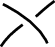}}
\Bigr\rangle_{\mathbb P}
=
A
\Bigl\langle
\raisebox{0pt}{\includegraphics[scale=0.6]{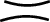}}
\Bigr\rangle_{\mathbb P}
+
A^{-1}
\Bigl\langle
\raisebox{-2pt}{\includegraphics[scale=0.75]{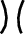}}
\Bigr\rangle_{\mathbb P}.
\]
\end{itemize}

Equivalently,
\[
\langle K\rangle_{\mathbb P}
=
\sum_{r\in\mathcal C(K)}
\mathbb P(r)\,\langle r\rangle,
\]
where \(\mathcal C(K)\) is the set of complete classical resolutions of \(K\), \(\mathbb P(r)\) is the probability of the resolution \(r\), and \(\langle r\rangle\) is the classical Kauffman bracket of the resolved diagram.

If the probabilities \(p_i\) are treated as formal variables, then
\[
\langle K\rangle_{\mathbb P}\in \mathbb Z[A^{\pm1},p_1,\ldots,p_n].
\]
For numerical choices of the probabilities, \(\langle K\rangle_{\mathbb P}\) is the corresponding specialization in \(\mathbb R[A^{\pm1}]\).
\end{definition}

\begin{ex}\label{exkauf}\rm
Consider a probabilistic trefoil diagram with one pre-crossing labelled \(p_1=0.7\). By linearity of the resolution expansion,
\[
\langle K\rangle_{\mathbb P}
=
0.7\,\langle K_+\rangle
+
0.3\,\langle K_-\rangle,
\]
where \(\langle K_+\rangle\) and \(\langle K_-\rangle\) denote the classical Kauffman brackets of the positive and negative resolutions, respectively.
\end{ex}

The weighted probabilistic bracket encodes the resolution probabilities of a probabilistic pseudo knot into a polynomial expression. Each complete resolution contributes its classical Kauffman bracket, weighted according to its probability.

\begin{theorem}\label{wbk}
The weighted probabilistic bracket polynomial is invariant under label-preserving regular isotopy.
\end{theorem}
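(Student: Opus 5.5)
The plan is to work with the resolution expansion
\[
\langle K\rangle_{\mathbb P}=\sum_{r\in\mathcal C(K)}\mathbb P(r)\,\langle r\rangle
\]
rather than with the recursive skein rules. The idea is to show that every generating move of label-preserving regular isotopy (Definition~\ref{def:prob-pseudo-regular}) induces a probability-preserving bijection of complete resolutions under which corresponding resolved diagrams are regularly isotopic classical diagrams. Invariance then follows from the classical regular isotopy invariance of the Kauffman bracket. First I would note that the recursive definition is well defined and agrees with the expansion. Applying the probabilistic skein relation at every pre-crossing, in any order, yields the same sum, because the weights multiply to $\prod_{i\in P(r)}p_i\prod_{j\in N(r)}(1-p_j)$, as in the definition of the probabilistic resolution tree. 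Rules 2--4 then evaluate each classical diagram to its ordinary bracket.

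Next I would split into cases according to the move. For planar isotopy and the classical moves $R2$ and $R3$, which take place away from pre-crossings, the pre-crossings and their labels are untouched. Resolutions $r$ of $K$ and $r'$ of $K'$ therefore correspond by the same sign assignment and have equal probabilities. Moreover $r'$ is obtained from $r$ by the same classical $R2$ or $R3$ move, so $\langle r\rangle=\langle r'\rangle$.

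For $PR2$, the two pre-crossings carry labels $p_a,p_b$, which are transported to the other side of the move. Since the labels are transported, the resolutions still correspond bijectively with equal probability. On each side, a resolution of the bigon with two crossings of opposite sign is a classical $R2$ configuration. A resolution with equal signs, on the other hand, is not removable, so the reading of $PR2$ matters. I would read $PR2$ as in Figure~\ref{reid}: it creates or removes a bigon whose two pre-crossings are moved as a pair, while the remaining part of the diagram is related by planar isotopy or by $R2$/$R3$ among classical strands. For example, a classical strand sliding past a pre-crossing is checked resolution by resolution. For $PR3$, a strand passes over or under a pre-crossing with label $p$. Each of the two resolutions of that pre-crossing turns the move into a classical $R3$ move with the same sign at the corresponding crossing, so the sum is unchanged term by term.

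The main obstacle is pinning down exactly which local pictures the moves $PR2$ and $PR3$ are, since the statement depends on them. In particular, I must ensure that the bijection of resolutions pairs the label of each pre-crossing with the corresponding crossing. If $PR2$ genuinely cancels two pre-crossings, then it preserves the sum only when their same-sign resolutions contribute consistently, and the labels would be constrained. So I would first verify from Figure~\ref{reid} that in the paper's convention every pseudo move preserves the multiset of labelled pre-crossings, exactly as in the proofs of Theorems~\ref{pwinv} and~\ref{plinv}. The rest is the term-by-term comparison above. Finally, I would remark that $R1$ and $PR1$ are excluded because a kink multiplies the bracket by $-A^{\pm3}$.
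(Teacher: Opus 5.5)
Your overall strategy matches the paper's: expand $\langle K\rangle_{\mathbb P}=\sum_{r}\mathbb P(r)\langle r\rangle$, show that each generating move induces a probability-preserving bijection of complete resolutions under which corresponding resolved diagrams are classically regularly isotopic, and conclude from regular isotopy invariance of the classical bracket. Your cases for planar isotopy, $R2$, $R3$ and $PR3$ are fine.

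The gap is $PR2$. You misidentify this move and never settle the case. You describe it as a move on a bigon of two pre-crossings, and your tentative reading, that it ``creates or removes a bigon whose two pre-crossings are moved as a pair'', is exactly a move under which the argument breaks. The number of pre-crossings changes, so there is no bijection of resolutions. Moreover, the equal-sign resolutions are clasps $\sigma^{\pm 2}$, which are not regularly isotopic to two parallel arcs. Deferring to a check that every pseudo move preserves the multiset of labelled pre-crossings is necessary but not sufficient: you must still show that the resolved pictures on the two sides are regularly isotopic.

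The missing identification is that $PR2$ is the rigid-vertex twist move alluded to in the introduction; in pseudo braid form it is $\sigma_i p_i=p_i\sigma_i$. A pre-crossing $P$ and a classical crossing $c$ form a bigon, and $c$ is transferred to the other side of $P$. The move keeps $c$, and it keeps $P$ with its label and its sign (in braid form the positive resolution of $p_i$ is the same generator on both sides). So resolutions correspond by the same sign assignment and have equal probabilities.

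For a fixed resolution $P^{\varepsilon}$, the two sides are the twist regions $c\,P^{\varepsilon}$ and $P^{\varepsilon}c$ on the same two strands. If $P^{\varepsilon}$ and $c$ have the same sign, the two resolved diagrams coincide. If they have opposite signs, each side reduces by one $R2$ move to the same pair of parallel arcs. Hence corresponding resolutions have equal classical brackets, which is exactly the content of the paper's one-line assertion about $PR2$. With this case supplied, your argument is complete and in fact more explicit than the paper's.
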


\begin{proof}
Using the expansion
\[
\langle K\rangle_{\mathbb P}
=
\sum_{r\in\mathcal C(K)}
\mathbb P(r)\,\langle r\rangle,
\]
it suffices to compare complete resolutions before and after a label-preserving regular isotopy move.

The classical Reidemeister moves \(R2\) and \(R3\) preserve the classical Kauffman bracket. The pseudo Reidemeister moves \(PR2\) and \(PR3\), under label-preserving regular isotopy, induce bijections between complete resolutions, preserve the corresponding resolution probabilities, and send corresponding resolved diagrams to diagrams related by classical regular isotopy. Therefore the probabilistic state sums before and after the move agree. Hence \(\langle K\rangle_{\mathbb P}\) is invariant under label-preserving regular isotopy.
\end{proof}

\begin{remark}\rm
The weighted probabilistic bracket is a regular isotopy invariant. As in the classical case, it is not invariant under \(R1\). It is also not expected to be invariant under \(PR1\) unless an additional normalization or convention is imposed.
\end{remark}

%%%%%%%%%%%%%%%%%%%%%%%%%%%%%%%%%%%%%%%%%%%%%%%%%%%%%%%%%%%%%%%%%%%%%%%%%%

\subsubsection{Probabilistic equivalence and the weighted probabilistic bracket}

In this subsection we explore the connection between probabilistic similarity and the weighted probabilistic Kauffman bracket polynomial. Since the Kauffman bracket is a regular isotopy invariant of diagrams, rather than an ambient isotopy invariant of knot types, the comparison in this subsection is made at the level of complete resolved diagrams, or equivalently at the level of regular isotopy classes of complete resolutions.

\smallbreak

\begin{center}
\fbox{\parbox{4.65in}{
{\it If two probabilistic pseudo knots have close diagram-level resolution distributions,
how do their weighted probabilistic Kauffman bracket polynomials differ?}}}
\end{center}

\smallbreak

Let \(\nu_K\) denote the probability distribution on the set \(\mathcal C(K)\) of complete classical resolutions of \(K\). The weighted probabilistic Kauffman bracket is the expectation of the classical Kauffman bracket over these complete resolutions:
\[
\langle K\rangle_{\mathbb P}
=
\sum_{C\in\mathcal C(K)}
\nu_K(C)\,\langle C\rangle,
\]
where \(\langle C\rangle\) denotes the classical Kauffman bracket polynomial of the resolved diagram \(C\).

Thus the probabilistic bracket records how the full range of complete resolved diagrams compatible with \(K\) contributes to its overall bracket polynomial. Probabilistic pseudo knots with similar diagram-level resolution distributions therefore produce similar probabilistic brackets, while significant differences in resolution behaviour lead to corresponding differences in the resulting polynomials.

For a Laurent polynomial
\[
f(A)=\sum_k c_kA^k,
\]
we write
\[
\|f\|_\infty:=\max_k |c_k|
\]
for its coefficient sup norm.

\begin{theorem}\label{thm:bracket_diff_bound}
Let \(K_1\) and \(K_2\) be probabilistic pseudo knots, and let \(\nu_{K_1}\) and \(\nu_{K_2}\) be their probability distributions on complete resolved diagrams, extended by zero to a common finite set
\[
R:=\mathcal C(K_1)\cup \mathcal C(K_2).
\]
If
\[
d_{\mathrm{TV}}(\nu_{K_1},\nu_{K_2})\le \epsilon,
\]
then
\[
\left\|
\langle K_1\rangle_{\mathbb P}
-
\langle K_2\rangle_{\mathbb P}
\right\|_\infty
\le
2\epsilon
\cdot
\max_{C\in R}
\|\langle C\rangle\|_\infty .
\]
\end{theorem}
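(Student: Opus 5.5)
The plan mirrors the stability argument used earlier for the probabilistic writhe and linking number, now applied coefficientwise to Laurent polynomials.

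\textbf{Step 1: both brackets as sums over \(R\).} Since \(\nu_{K_i}\) vanishes outside \(\mathcal C(K_i)\), the expectation formula for the weighted bracket gives
\[
\langle K_i\rangle_{\mathbb P}=\sum_{C\in R}\nu_{K_i}(C)\,\langle C\rangle ,\qquad i=1,2.
\]
This holds both as polynomials in \(A^{\pm1}\) and coefficient by coefficient. Subtracting, we get
\[
\langle K_1\rangle_{\mathbb P}-\langle K_2\rangle_{\mathbb P}=\sum_{C\in R}\big(\nu_{K_1}(C)-\nu_{K_2}(C)\big)\langle C\rangle .
\]
Because \(R\) is finite, only finitely many powers \(A^k\) occur, so \(\|\cdot\|_\infty\) is a genuine maximum.

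\textbf{Step 2: bound each coefficient.} Write \(\langle C\rangle=\sum_k c_k(C)A^k\). For each exponent \(k\), the coefficient of \(A^k\) in the difference is \(\sum_{C\in R}(\nu_{K_1}(C)-\nu_{K_2}(C))c_k(C)\). The triangle inequality gives
\[
\Big|\sum_{C\in R}(\nu_{K_1}(C)-\nu_{K_2}(C))c_k(C)\Big|\le \sum_{C\in R}|\nu_{K_1}(C)-\nu_{K_2}(C)|\,|c_k(C)|.
\]
Since \(|c_k(C)|\le\|\langle C\rangle\|_\infty\le M\), where \(M:=\max_{C\in R}\|\langle C\rangle\|_\infty\), the right-hand side is at most \(M\sum_{C\in R}|\nu_{K_1}(C)-\nu_{K_2}(C)|\).

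\textbf{Step 3: convert to total variation.} By Definition~\ref{tvd}, applied here to the diagram-level distributions,
\[
\sum_{C\in R}|\nu_{K_1}(C)-\nu_{K_2}(C)| = 2\,d_{\mathrm{TV}}(\nu_{K_1},\nu_{K_2})\le 2\epsilon .
\]
So every coefficient of the difference is bounded by \(2\epsilon M\). Taking the maximum over \(k\) then yields the claimed inequality.

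\textbf{Main point of care.} The key step is Step 1, and specifically the requirement that the expansion be indexed by resolved diagrams \(C\in R\) rather than by knot types. As the remark following the stability of probabilistic writhe explains, the classical bracket is only a regular isotopy invariant, so \(\langle C\rangle\) is well defined only at the diagram level.

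A related subtlety is how the common support \(R\) is formed. If \(\mathcal C(K_1)\) and \(\mathcal C(K_2)\) are taken as sets of regular isotopy classes of resolved diagrams, then \(\langle C\rangle\) is well defined on \(R\). The extension by zero then makes both the expectation formula and the total variation identity exact, and the remaining steps are routine estimates.
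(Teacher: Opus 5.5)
Your proposal is correct and takes essentially the same route as the paper. You expand both brackets over the common support \(R\), subtract, and apply the triangle inequality; you do this coefficientwise, while the paper applies it directly to the sup norm, which is the same estimate. You then bound by \(M=\max_{C\in R}\|\langle C\rangle\|_\infty\) and identify \(\sum_{C\in R}|\nu_{K_1}(C)-\nu_{K_2}(C)|\) with \(2\,d_{\mathrm{TV}}(\nu_{K_1},\nu_{K_2})\). Your remark about forming \(R\) from regular isotopy classes of resolved diagrams is a sensible clarification that the paper leaves implicit.
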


\begin{proof}
By definition,
\[
\langle K_1\rangle_{\mathbb P}
=
\sum_{C\in R}
\nu_{K_1}(C)\langle C\rangle,
\qquad
\langle K_2\rangle_{\mathbb P}
=
\sum_{C\in R}
\nu_{K_2}(C)\langle C\rangle,
\]
where both distributions have been extended by zero outside their supports. Hence
\[
\langle K_1\rangle_{\mathbb P}
-
\langle K_2\rangle_{\mathbb P}
=
\sum_{C\in R}
\big(\nu_{K_1}(C)-\nu_{K_2}(C)\big)
\langle C\rangle .
\]
Taking the coefficient sup norm gives
\[
\left\|
\langle K_1\rangle_{\mathbb P}
-
\langle K_2\rangle_{\mathbb P}
\right\|_\infty
\le
\sum_{C\in R}
|\nu_{K_1}(C)-\nu_{K_2}(C)|
\,\|\langle C\rangle\|_\infty .
\]
Let
\[
M:=
\max_{C\in R}
\|\langle C\rangle\|_\infty .
\]
Since \(R\) is finite, \(M\) exists. Therefore
\[
\left\|
\langle K_1\rangle_{\mathbb P}
-
\langle K_2\rangle_{\mathbb P}
\right\|_\infty
\le
M
\sum_{C\in R}
|\nu_{K_1}(C)-\nu_{K_2}(C)|.
\]
By the definition of total variation distance,
\[
\sum_{C\in R}
|\nu_{K_1}(C)-\nu_{K_2}(C)|
=
2\,d_{\mathrm{TV}}(\nu_{K_1},\nu_{K_2}).
\]
Thus
\[
\left\|
\langle K_1\rangle_{\mathbb P}
-
\langle K_2\rangle_{\mathbb P}
\right\|_\infty
\le
2M\,d_{\mathrm{TV}}(\nu_{K_1},\nu_{K_2}).
\]
If \(d_{\mathrm{TV}}(\nu_{K_1},\nu_{K_2})\le \epsilon\), the result follows.
\end{proof}

\begin{remark}\rm
The distinction between \(\mu_K\) and \(\nu_K\) is essential. The distribution \(\mu_K\) is defined on classical knot types, while \(\nu_K\) is defined on complete resolved diagrams or their regular isotopy classes. Since the Kauffman bracket is not an ambient isotopy invariant of knot types, stability of the probabilistic bracket must be formulated using \(\nu_K\), not the knot-type spectrum \(\mu_K\).
\end{remark}

%%%%%%%%%%%%%%%%%%%%%%%%%%%%%%%%%%%%%%%%%%%%%%%%%%%%%%%%%%%%%%%%%%%%%%%%%%

\subsection{Probabilistic quantum invariants}

Quantum invariants provide powerful tools for distinguishing and classifying knots and links. In classical knot theory, the term \emph{quantum invariant} usually refers to invariants generalizing the Jones polynomial, often constructed from Markov traces, state-sum models involving solutions of the Yang--Baxter equation, or quantum group methods; see, for example, Kauffman~\cite{KauffmanKnotsPhysics} and Ohtsuki~\cite{OhtsukiQuantumInvariants}. 
Although these invariants are historically and conceptually inspired by quantum theory, they are not necessarily invariants of physical quantum systems in a direct sense.

In the probabilistic setting considered here, such invariants extend naturally by averaging over the classical knot types arising from all complete resolutions of a probabilistic pseudo knot, weighted by their associated resolution probabilities. Thus a probabilistic pseudo knot gives rise to expectation-type quantum invariants, as well as probability distributions of quantum invariant values.

A different, more physical, direction would be to define quantum-probabilistic invariants in a setting related to quantum computation or quantum measurement, where the probabilities arise from quantum states or amplitudes rather than from prescribed classical crossing probabilities. 
This will be the subject of future work.

\begin{definition}\rm
Let \(I\) be a classical knot invariant taking values in a normed vector space \(V\). The \emph{probabilistic quantum invariant} of a probabilistic pseudo knot \(K\) is defined by
\[
I_{\mathbb P}(K)
=
\sum_{C\in\mathcal K(K)}
\mu_K(C)\,I(C),
\]
where \(\mathcal K(K)\) denotes the set of classical knot types arising among the complete resolutions of \(K\), and \(\mu_K(C)\) is the resolution distribution on classical knot types.
\end{definition}

\begin{example}\rm
For the Jones polynomial \(J(C,t)\), the probabilistic version is
\[
J_{\mathbb P}(K,t)
=
\sum_{C\in\mathcal K(K)}
\mu_K(C)\,J(C,t).
\]
\end{example}

\begin{remark}\rm
The probabilistic quantum invariant \(I_{\mathbb P}(K)\) is invariant under label-preserving pseudo isotopy. This follows from the invariance of the classical invariant \(I(C)\), together with the preservation of the resolution distribution \(\mu_K\) under pseudo-Reidemeister moves respecting the probabilistic labels.
\end{remark}

\begin{proposition}[Stability under probabilistic equivalence]
Let \(K_1\) and \(K_2\) be probabilistic pseudo knots with resolution distributions \(\mu_{K_1}\) and \(\mu_{K_2}\). Suppose
\[
d_{\mathrm{TV}}(\mu_{K_1},\mu_{K_2})\le \epsilon.
\]
Then
\[
\|I_{\mathbb P}(K_1)-I_{\mathbb P}(K_2)\|
\le
2\epsilon
\cdot
\max_{C\in\mathcal K(K_1)\cup\mathcal K(K_2)}
\|I(C)\|,
\]
where \(\|\cdot\|\) denotes the chosen norm on \(V\).
\end{proposition}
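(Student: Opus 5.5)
The argument follows the same pattern as Theorem~\ref{thm:bracket_diff_bound} and the stability results for \(pw\) and \(pl_{ij}\); the only change is that the coefficient sup norm is replaced by an arbitrary norm on \(V\). Set \(R:=\mathcal K(K_1)\cup\mathcal K(K_2)\), which is finite because each diagram has finitely many complete resolutions. Extend \(\mu_{K_1}\) and \(\mu_{K_2}\) by zero to \(R\), as in Definition~\ref{tvd}. With this convention both expectations can be written over the same index set:
\[
I_{\mathbb P}(K_\ell)=\sum_{C\in R}\mu_{K_\ell}(C)\,I(C),\qquad \ell=1,2.
\]
Adding zero-weight terms does not change the sums, so this agrees with the definition.

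Subtracting the two expressions gives
\[
I_{\mathbb P}(K_1)-I_{\mathbb P}(K_2)=\sum_{C\in R}\big(\mu_{K_1}(C)-\mu_{K_2}(C)\big)I(C),
\]
which is a finite linear combination in \(V\). The triangle inequality and absolute homogeneity of the norm then give
\[
\|I_{\mathbb P}(K_1)-I_{\mathbb P}(K_2)\|\le\sum_{C\in R}|\mu_{K_1}(C)-\mu_{K_2}(C)|\,\|I(C)\|.
\]
Let \(M:=\max_{C\in R}\|I(C)\|\), which exists because \(R\) is finite. Bounding each \(\|I(C)\|\) by \(M\) and using \(\sum_{C\in R}|\mu_{K_1}(C)-\mu_{K_2}(C)|=2\,d_{\mathrm{TV}}(\mu_{K_1},\mu_{K_2})\le 2\epsilon\) yields the claimed bound \(2\epsilon M\).

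No step here is a real obstacle. The one point that needs care is well-definedness: \(I\) must be a genuine ambient isotopy invariant of knot types, so that \(I(C)\) depends only on the type \(C\) and not on the resolved diagram. Otherwise the comparison would have to be made at the level of \(\nu_K\), as the paper's remark on the bracket explains. The proof should state this assumption explicitly, and it is built into the hypothesis that \(I\) is a classical knot invariant.
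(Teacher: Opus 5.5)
Your proposal is correct and follows the paper's proof step by step. You extend both distributions by zero to $R=\mathcal K(K_1)\cup\mathcal K(K_2)$, apply the triangle inequality to $\sum_{C\in R}(\mu_{K_1}(C)-\mu_{K_2}(C))I(C)$, bound each $\|I(C)\|$ by $M$, and use $\sum_{C\in R}|\mu_{K_1}(C)-\mu_{K_2}(C)|=2\,d_{\mathrm{TV}}(\mu_{K_1},\mu_{K_2})$. Your remark that $I$ must be an ambient isotopy invariant, so that the comparison can go through $\mu_K$ rather than $\nu_K$, is a useful clarification. The paper leaves it implicit in calling $I$ a classical knot invariant.
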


\begin{proof}
Extend both resolution distributions by zero to the common finite set
\[
R:=\mathcal K(K_1)\cup\mathcal K(K_2).
\]
Then
\[
I_{\mathbb P}(K_1)-I_{\mathbb P}(K_2)
=
\sum_{C\in R}
\big(\mu_{K_1}(C)-\mu_{K_2}(C)\big)I(C).
\]
Taking norms and using the triangle inequality gives
\[
\|I_{\mathbb P}(K_1)-I_{\mathbb P}(K_2)\|
\le
\sum_{C\in R}
|\mu_{K_1}(C)-\mu_{K_2}(C)|\,\|I(C)\|.
\]
Let
\[
M:=
\max_{C\in R}\|I(C)\|.
\]
Since \(R\) is finite,
\[
\|I_{\mathbb P}(K_1)-I_{\mathbb P}(K_2)\|
\le
M\sum_{C\in R}
|\mu_{K_1}(C)-\mu_{K_2}(C)|
=
2M\,d_{\mathrm{TV}}(\mu_{K_1},\mu_{K_2}).
\]
The result follows from the assumption
\(d_{\mathrm{TV}}(\mu_{K_1},\mu_{K_2})\le\epsilon\).
\end{proof}

%%%%%%%%%%%%%%%%%%%%%%%%%%%%%%%%%%%%%%%%%%%%%%%%%%%%%%%%%%%%%%%%%%%%%%%%%%
%%%%%%%%%%%%%%%%%%%%%%%%%%%%%%%%%%%%%%%%%%%%%%%%%%%%%%%%%%%%%%%%%%%%%%%%%%
%%%%%%%%%%%%%%%%%%%%%%%%%%%%%%%%%%%%%%%%%%%%%%%%%%%%%%%%%%%%%%%%%%%%%%%%%%
%%%%%%%%%%%%%%%%%%%%%%%%%%%%%%%%%%%%%%%%%%%%%%%%%%%%%%%%%%%%%%%%%%%%%%%%%%

\section{Probabilistic Seifert Surfaces \& Probabilistic Genus}\label{sec:probSeif}

Seifert surfaces play a central role in classical knot theory, providing two-dimensional surfaces whose boundary is a given knot or link. Classically, one way to construct such a surface is Seifert's algorithm: starting from an oriented knot diagram, one performs oriented smoothings at all crossings, obtains a collection of Seifert cycles, and then attaches half-twisted bands to recover the original diagram.

For probabilistic pseudo knots, the presence of pre-crossings introduces uncertainty. Each pre-crossing may resolve positively or negatively with a specified probability, and each complete resolution determines a classical diagram to which Seifert's algorithm may be applied. Consequently, a probabilistic pseudo knot does not determine a single Seifert surface, but rather a probability distribution of Seifert surfaces arising from its complete resolutions.

In this section, we formalize the notion of \emph{probabilistic Seifert surfaces} as the pushforward of the complete-resolution distribution under the Seifert-surface construction. We then introduce corresponding genus-type quantities, including the minimal genus attainable among the Seifert surfaces arising from complete resolutions and the expected genus with respect to the induced probability distribution.

%%%%%%%%%%%%%%%%%%%%%%%%%%%%%%%%%%%%%%%%%%%%%%%%%%%%%%%%%%%%%%%%%%%%%%%%%%

\subsection{Probabilistic Seifert surfaces}

Let \(K\) be a probabilistic pseudo knot with set of complete resolutions \(\mathcal C(K)\). Let \(\nu_K\) denote the probability distribution on \(\mathcal C(K)\). For each complete classical resolution \(C\in\mathcal C(K)\), let \(S(C)\) denote the Seifert surface obtained from the specific resolved diagram \(C\) via the classical Seifert algorithm.

We now formalize the notion of probabilistic Seifert surfaces.

\begin{definition}\rm
The \emph{probabilistic Seifert surface distribution} of a probabilistic pseudo knot \(K\) is the probability measure \(\mu_S\) on the finite set
\[
\Sigma(K)=\{S(C)\mid C\in\mathcal C(K)\}.
\]
defined by
\[
\mu_S(S)
=
\sum_{\substack{C\in\mathcal C(K):\\ S(C)=S}}
\nu_K(C).
\]
Equivalently, \(\mu_S\) is the pushforward of the complete-resolution distribution \(\nu_K\) under the map
\[
S:\mathcal C(K)\longrightarrow \Sigma(K),
\qquad
C\longmapsto S(C).
\]
It records the probability with which each Seifert surface arises from the complete resolutions of \(K\).
\end{definition}

Since \(\mathcal C(K)\) is finite, the set \(\Sigma(K)\) is finite and \(\mu_S\) defines a finite probability distribution.

\begin{remark}\rm
The equality \(S(C)=S\) should be understood with respect to the chosen level of equivalence for Seifert surfaces. One may work with surfaces produced directly by the Seifert algorithm from diagrams, with ambient isotopy classes of such surfaces, or simply with their genera. The choice should be fixed before applying the distribution \(\mu_S\).
\end{remark}

\smallbreak

To gain geometric intuition, we describe a diagrammatic visualization of how Seifert surfaces vary across resolutions. This is not a new topological construction, but rather a heuristic illustration of how the classical Seifert algorithm behaves under different resolutions.

\smallbreak

\noindent\textbf{Step 1: Resolution and oriented smoothings.}
For a fixed classical resolution of \(K\), apply the classical Seifert algorithm by performing oriented smoothings at all crossings (Figure~\ref{orsm}). The variation between surfaces arises from the different choices of resolutions at the pre-crossings.

\begin{figure}[H]
\begin{center}
\includegraphics[width=1.4in]{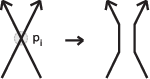}
\end{center}
\caption{Oriented smoothing of a resolved crossing.}
\label{orsm}
\end{figure}

\noindent\textbf{Step 2: Seifert cycles.}
The oriented smoothings produce a collection of disjoint oriented simple closed curves, called Seifert cycles. Each complete resolution yields a corresponding collection of cycles.

\smallbreak

\noindent\textbf{Step 3: Band attachments.}
For each crossing, a half-twisted band is attached between the corresponding Seifert cycles according to the orientation and the resolved crossing sign. In a heuristic visualization, bands arising from former pre-crossings may be labelled by their associated probabilities \(p_i\), reflecting how different resolutions contribute to the overall distribution of surfaces.

\smallbreak

\noindent\textbf{Step 4: Surface assembly.}
Embedding the Seifert cycles and bands in three-dimensional space and thickening the resulting complex produces a Seifert surface corresponding to the chosen resolution. Figure~\ref{pss} illustrates a probabilistic trefoil example.

\begin{figure}[H]
\begin{center}
\includegraphics[width=4.4in]{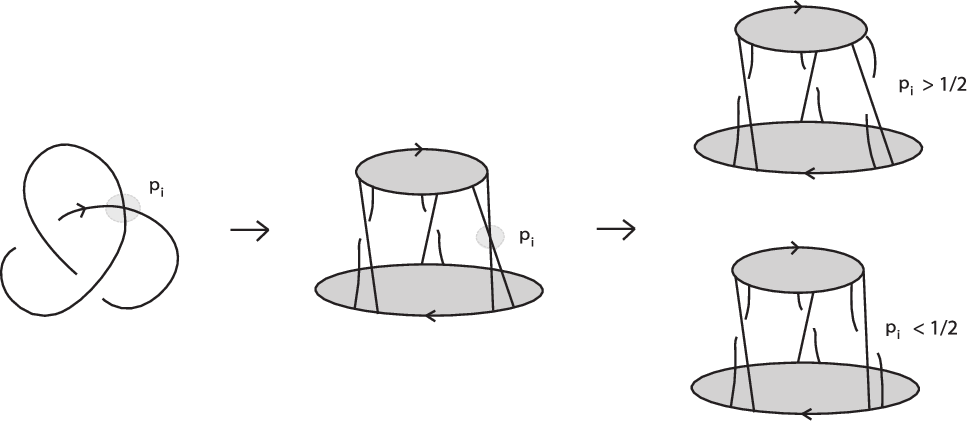}
\end{center}
\caption{Seifert surfaces arising from resolutions of a probabilistic pseudo trefoil. The crossings, or equivalently the corresponding half-twisted bands, are understood to be filled by surfaces, so that the figure represents the associated Seifert surfaces rather than only their boundary curves. Different resolutions contribute distinct classical Seifert surfaces weighted by their probabilities.}

\label{pss}
\end{figure}

\begin{remark}\rm
The procedure above visualizes how Seifert surfaces vary across resolutions, but the probabilistic object is the distribution \(\mu_S\) on classical Seifert surfaces. This distribution captures the uncertainty encoded in the pseudo knot and provides the foundation for probabilistic analogues of genus, Seifert matrices, and related constructions.
\end{remark}

%%%%%%%%%%%%%%%%%%%%%%%%%%%%%%%%%%%%%%%%%%%%%%%%%%%%%%%%%%%%%%%%%%%%%%%%%%

\subsection{Probabilistic genus}

For a classical knot \(C\), the genus \(g(C)\) is the minimal genus among all Seifert surfaces spanning \(C\). For a probabilistic pseudo knot \(K\), the complete resolutions determine a finite set of classical knot types, each with its own classical genus. We first record the smallest genus attainable among these resolved knot types.

\begin{definition}\rm
Let \(K\) be a probabilistic pseudo knot, and let
\[
\mathcal K(K)
\]
denote the set of classical knot types arising among the complete resolutions of \(K\). The \emph{minimal resolution genus}, denoted \(pg(K)\), is defined by
\[
pg(K)=\min\{g(C): C\in\mathcal K(K)\},
\]
where \(g(C)\) denotes the classical knot genus of \(C\).

Thus \(pg(K)\) is the minimal genus among all classical knot types obtained as resolutions of \(K\). In particular, \(pg(K)\) depends only on the set of resolved knot types, and not on their probabilities.
\end{definition}

\begin{remark}\rm
The minimal resolution genus records only the smallest genus that can occur among the classical resolutions of \(K\), and therefore ignores the probabilities assigned to those
resolutions. A complementary probability-sensitive quantity is the \emph{expected resolution genus}, defined by
\[
\mathbb E_g(K)
:=
\sum_{C\in \mathcal K(K)}
\mu_K(C)\,g(C),
\]
where \(\mu_K(C)\) denotes the resolution distribution of \(K\) on classical knot types.
Thus \(\mathbb E_g(K)\) is the expected genus of a random classical resolution of \(K\).
Unlike \(pg(K)\), the quantity \(\mathbb E_g(K)\) depends on the probabilities of the
resolved knot types.
\end{remark}

\begin{theorem}
The minimal resolution genus \(pg(K)\) is invariant under label-preserving pseudo isotopy. In particular, it is invariant under label-preserving regular isotopy.
\end{theorem}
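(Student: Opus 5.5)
The plan is to reduce the statement to the invariance of the set \(\mathcal K(K)\) of resolved knot types. By definition, \(pg(K)=\min\{g(C): C\in\mathcal K(K)\}\) depends only on this finite set, since the classical genus \(g\) is an ambient isotopy invariant of knot types. It therefore suffices to show that whenever \(K\) and \(K'\) are related by a single label-preserving pseudo Reidemeister move (classical \(R1,R2,R3\) or pseudo \(PR1,PR2,PR3\)), we have \(\mathcal K(K)=\mathcal K(K')\). The general case then follows by induction on the length of the sequence of moves. The statement about regular isotopy is immediate, since regular isotopy uses a subset of these moves.

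For every move other than \(PR1\), I would reuse the argument in the proof of the spectrum proposition in Section~\ref{sec:preliminaries}. Such a move induces a bijection \(\mathcal C(K)\to\mathcal C(K')\), obtained by transporting each labelled pre-crossing together with its chosen sign. Corresponding complete resolutions are related by classical Reidemeister moves. A \(PR2\) move with signs fixed becomes an \(R2\) move, provided the two pre-crossings resolve compatibly; I need to check this bijection carefully. A \(PR3\) move becomes an \(R3\) move. Hence corresponding resolutions have the same knot type, and the images of the two resolution maps coincide. One could instead invoke directly that the spectrum proposition gives \(\mathrm{Spec}(K)=\mathrm{Spec}(K')\). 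However, the support of \(\mathrm{Spec}\) only records types with positive probability, whereas \(\mathcal K(K)\) is defined as all types arising among complete resolutions, including those of probability zero when some \(p_i\in\{0,1\}\). I would therefore argue at the level of the resolution sets themselves rather than the supports.

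The main point needing care is \(PR1\) (and \(R1\)), where the number of resolutions changes. If \(K'\) is obtained from \(K\) by adding a pre-crossing kink, then each resolution \(r\) of \(K\) corresponds to two resolutions \(r_\pm\) of \(K'\), with a positive or negative kink respectively. Each of these becomes isotopic to \(r\) after a classical \(R1\) move. Thus the map \(r_\pm\mapsto r\) is surjective and type-preserving, so again \(\mathcal K(K')=\mathcal K(K)\). This holds independently of the label assigned to the new pre-crossing, since \(pg\) ignores probabilities. Taking the minimum of \(g\) over equal finite nonempty sets yields \(pg(K)=pg(K')\).
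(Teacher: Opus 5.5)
Your strategy is the paper's own. You show that every move preserves the set \(\mathcal K(K)\) of resolved knot types, using a sign-transporting bijection of complete resolutions for all moves except \(PR1\) and the two-to-one kink map for \(PR1\), and then take the minimum of the genus. Your decision to argue with \(\mathcal K(K)\) itself rather than with the support of \(\mathrm{Spec}\), which can be smaller when some \(p_i\in\{0,1\}\), is correct and matches how the paper phrases its proof.

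The step you flagged for \(PR2\) rests on a misreading of the move, and under your reading it would fail. You treat \(PR2\) as creating or deleting a bigon of two pre-crossings, which becomes \(R2\) ``provided the two pre-crossings resolve compatibly.'' For such a move there is no bijection of complete resolutions, since the number of pre-crossings changes by two. Moreover, the two equal-sign choices turn the bigon into a clasp, which in general changes the knot type. For instance, inserting such a bigon into the twist region of the closed braid \(\sigma_1^3\) produces resolutions including the closures of \(\sigma_1\) (the unknot) and \(\sigma_1^5\), so \(pg\) would drop from \(1\) to \(0\).

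In the paper's setting this does not happen. Apart from \(PR1\), the pseudo moves only transport labelled pre-crossings and never create or delete them; the invariance of \(pw\) relies on this too. \(PR2\) and \(PR3\) are the rigid-vertex moves:
\begin{itemize}
\item the twist move, which slides a pre-crossing past an adjacent classical crossing on the same two strands (in braid form \(\sigma_i p_i=p_i\sigma_i\));
\item the move passing a strand entirely over or under a pre-crossing.
\end{itemize}
Once the pre-crossing's sign is fixed, the twist move becomes either an identity of diagrams (\(\sigma_i\sigma_i\) on both sides) or two \(R2\) cancellations (\(\sigma_i\sigma_i^{-1}\) versus \(\sigma_i^{-1}\sigma_i\)). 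The strand-passing move becomes \(R3\). No compatibility condition arises, and your bijection argument goes through exactly as in the paper.

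Incidentally, \(R1\) does not change the number of resolutions, so it belongs with the bijective moves rather than with \(PR1\).
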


\begin{proof}
All pseudo-Reidemeister moves except \(PR1\) induce bijections between complete resolutions and preserve the corresponding classical knot types. For \(PR1\), the two possible resolutions of the introduced or removed pre-crossing are positive and negative kinks, both of which are removed by the classical Reidemeister move \(R1\) and hence determine the same classical knot type. Thus label-preserving pseudo isotopy preserves the set \(\mathcal K(K)\) of resolved classical knot types.

Since classical knot genus is an ambient isotopy invariant, the genus of each resolved knot type is preserved. Hence the minimum genus across all resolved knot types is preserved.
\end{proof}

\begin{theorem}
The minimal resolution genus is not invariant under probabilistic equivalence.
\end{theorem}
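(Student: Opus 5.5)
The plan is to exhibit, for any fixed tolerance $\epsilon>0$, two probabilistic pseudo knots $K_1\sim_{\mathbb P}K_2$ with $pg(K_1)\neq pg(K_2)$. The key fact is that $pg$ depends only on the support $\mathcal K(K)$, whereas $d_{TV}$ barely registers a knot type of small probability mass, as in Example~\ref{exas}. So it suffices to find $K_1$ and $K_2$ whose resolution distributions are close but whose supports differ in their lowest-genus member.

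For the construction, take $K_2$ to be a classical trefoil diagram, viewed as a degenerate probabilistic pseudo knot with all labels in $\{0,1\}$. Then $\mu_{K_2}$ is the point mass on the trefoil and $pg(K_2)=1$. Take $K_1$ to be the completely probabilistic trefoil diagram with the three crossings labelled $p_1=p_2=p_3=1-\delta$, where $\delta>0$ is small and the signs are chosen so that the all-positive resolution is the trefoil of $K_2$. The resolution tree of the earlier trefoil example shows that $K_1$ resolves to this trefoil with probability $(1-\delta)^3$, to the opposite trefoil with probability $\delta^3$, and to the unknot with the remaining probability.

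Comparing the two distributions gives
\[
d_{TV}(\mu_{K_1},\mu_{K_2})=1-(1-\delta)^3\le 3\delta .
\]
Choosing $\delta<\epsilon/3$ makes $K_1\sim_{\mathbb P}K_2$. On the other hand, the unknot lies in $\mathcal K(K_1)$ because mixed resolutions have positive probability, so $pg(K_1)=0\neq 1=pg(K_2)$.

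The main point to check is that a mixed-sign resolution of the standard trefoil diagram really is the unknot. This is the classical fact that one crossing change unknots the trefoil, and the paper already uses it in the trefoil spectrum example. A second point is that, as noted in the remark after Definition~\ref{ppk}, the degenerate labels $0,1$ are admissible, so $K_2$ is a legitimate probabilistic pseudo knot. If one insists instead on both diagrams having genuinely probabilistic labels in $(0,1)$, the same argument applies with the reverse roles. One can then take $K_2$ equal to $K_1$ with a much smaller $\delta'$: both minimal resolution genera are $0$, so this variant does not separate them. The cleaner non-degenerate alternative is to compare $K_1$ with a probabilistic trefoil whose pre-crossings are added only as nugatory kinks, whose resolutions are all trefoils. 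That diagram has $pg=1$, again lies within distance $3\delta$ of $K_1$, and completes the argument.
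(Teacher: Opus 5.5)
Your argument is correct, and it uses the same mechanism as the paper's proof: $pg$ depends only on which knot types occur, while $d_{TV}$ charges a low-genus type only by its probability mass. The paper simply posits two resolution distributions. Both put mass $0.95$ on a common genus-$2$ knot, and the remaining $0.05$ sits on the unknot in one case and on a genus-$1$ knot in the other. It then computes $d_{TV}=0.05$ and concludes for a tolerance $\epsilon\ge 0.05$, without exhibiting diagrams that realize those distributions.

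Your version gains two things over this. First, the pair is realized by explicit diagrams. Viewing the trefoil as the closure of $\sigma_1^{3}$, every mixed resolution is the closure of $\sigma_1^{\pm1}$, hence the unknot, and $d_{TV}=1-(1-\delta)^3\le 3\delta$ is exact. Second, the conclusion holds for every tolerance $\epsilon>0$, so $pg$ can drop under arbitrarily small perturbations, not only at one fixed tolerance. This is consistent with the one-sided stability theorem, since the jump is downward. The paper's version is shorter but takes realizability for granted.

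One point needs tightening. Do not encode $K_2$ as three pre-crossings labelled $1$. Under the literal definition, $\mathcal K(K)$ is the set of knot types arising among \emph{all} complete resolutions. Flipping a label-$1$ pre-crossing gives an unknot resolution of probability $0$, and under that reading $pg(K_2)=0$. Instead, take $K_2$ to be the classical trefoil diagram with no pre-crossings. Alternatively, use your nugatory-kink variant, in which every resolution is the trefoil, so it is immune to this reading.

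Cut the intermediate sentences about reversing roles and replacing $\delta$ by $\delta'$. As you note yourself, they lead nowhere and only obscure the argument.
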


\begin{proof}
Suppose \(K_1\) and \(K_2\) are probabilistic pseudo knots whose resolution distributions are given by:
\[
\mu_{K_1}:
\begin{cases}
0.95 & \text{on a genus \(2\) knot},\\
0.05 & \text{on the unknot},
\end{cases}
\qquad
\mu_{K_2}:
\begin{cases}
0.95 & \text{on the same genus \(2\) knot},\\
0.05 & \text{on a genus \(1\) knot}.
\end{cases}
\]
Then
\[
d_{TV}(\mu_{K_1},\mu_{K_2})
=
\frac{1}{2}
\Big(
|0.95-0.95|
+
|0.05-0|
+
|0-0.05|
\Big)
=
0.05.
\]
Thus \(K_1\) and \(K_2\) are probabilistically equivalent for any tolerance \(\epsilon\ge 0.05\), for example for \(\epsilon=0.1\). However,
\[
pg(K_1)=0
\qquad
\text{while}
\qquad
pg(K_2)=1.
\]
Therefore \(pg\) is not determined by probabilistic equivalence.
\end{proof}

\begin{remark}\rm
The minimal resolution genus depends on the existence of low-genus resolved knot types, not on how likely those knot types are. Consequently, it is stable under pseudo isotopy but not under probabilistic equivalence, which compares resolution distributions up to a tolerance.
\end{remark}

\begin{lemma}[Total variation controls events]
\label{lem:TV-events}
Let \(\mu\) and \(\nu\) be probability measures on a finite set \(\Omega\). If
\[
d_{TV}(\mu,\nu)\le \varepsilon,
\]
then for every subset \(A\subseteq \Omega\),
\[
\nu(A)\ge \mu(A)-\varepsilon.
\]
In particular, if \(\mu(A)>\varepsilon\), then \(\nu(A)>0\).
\end{lemma}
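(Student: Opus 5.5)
The plan is to show that the total variation distance, defined in Definition~\ref{tvd} as half the \(\ell^1\) distance, dominates the discrepancy on every event. Fix \(A\subseteq\Omega\). Since both measures have total mass \(1\), we have
\[
\sum_{\omega\in\Omega}\big(\mu(\omega)-\nu(\omega)\big)=0 .
\]
Let \(B:=\{\omega\in\Omega:\mu(\omega)>\nu(\omega)\}\). Splitting the sum into its positive and negative parts shows that the positive part equals the negative part. Hence
\[
\sum_{\omega\in B}\big(\mu(\omega)-\nu(\omega)\big)=\frac12\sum_{\omega\in\Omega}|\mu(\omega)-\nu(\omega)|=d_{TV}(\mu,\nu).
\]

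Next I bound \(\mu(A)-\nu(A)\). Writing it as \(\sum_{\omega\in A}(\mu(\omega)-\nu(\omega))\), I discard the terms with \(\omega\in A\setminus B\), which are nonpositive. I then add the terms with \(\omega\in B\setminus A\), which are nonnegative. This gives
\[
\mu(A)-\nu(A)\le\sum_{\omega\in A\cap B}\big(\mu(\omega)-\nu(\omega)\big)\le \sum_{\omega\in B}\big(\mu(\omega)-\nu(\omega)\big)=d_{TV}(\mu,\nu)\le\varepsilon .
\]
Rearranging yields \(\nu(A)\ge\mu(A)-\varepsilon\).

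For the final assertion, if \(\mu(A)>\varepsilon\), then \(\nu(A)\ge\mu(A)-\varepsilon>0\).

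The only step needing care is the identity equating the positive-part sum with \(d_{TV}\), and it rests solely on both measures being normalized. Everything is finite, so no convergence issues arise.
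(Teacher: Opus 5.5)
Your proof is correct. It differs from the paper's in where the work is done.

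The paper's proof quotes the characterization $d_{TV}(\mu,\nu)=\sup_{A\subseteq\Omega}|\mu(A)-\nu(A)|$. Definition~\ref{tvd}, however, defines $d_{TV}$ as half the $\ell^1$ distance, so the paper assumes the equivalence of the two formulas rather than showing it. You instead derive, from the $\ell^1$ definition and the normalization $\mu(\Omega)=\nu(\Omega)=1$, exactly the half of that characterization the lemma needs: $\mu(A)-\nu(A)\le\sum_{\omega\in B}(\mu(\omega)-\nu(\omega))=d_{TV}(\mu,\nu)$ for every $A$.

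The paper's route is shorter and gives the two-sided bound $|\mu(A)-\nu(A)|\le\varepsilon$ at once. Yours is self-contained and recovers the two-sided bound by exchanging $\mu$ and $\nu$. Since equality holds at $A=B$, your argument in fact proves the full characterization the paper takes for granted. It also isolates the single ingredient being used, namely that $\mu$ and $\nu$ have equal total mass.
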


\begin{proof}
Using the characterization
\[
d_{TV}(\mu,\nu)
=
\sup_{A\subseteq\Omega}
|\mu(A)-\nu(A)|,
\]
we have
\[
|\mu(A)-\nu(A)|\le d_{TV}(\mu,\nu)\le\varepsilon
\]
for every \(A\subseteq\Omega\). Hence
\[
\nu(A)\ge \mu(A)-\varepsilon.
\]
\end{proof}

\begin{theorem}
\label{thm:pg-stability-one-sided}
Let \(K_1\) and \(K_2\) be probabilistic pseudo knots with resolution distributions \(\mu_{K_1}\) and \(\mu_{K_2}\) satisfying \(d_{TV}(\mu_{K_1},\mu_{K_2})\le \varepsilon\). Let
\(R:=\mathcal K(K_1)\cup\mathcal K(K_2)\), and define
\[
A:=
\{C\in R: g(C)=pg(K_1)\}.
\]
If \(\mu_{K_1}(A)>\varepsilon\), then \(pg(K_2)\le pg(K_1)\).
\end{theorem}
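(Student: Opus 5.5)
We apply Lemma~\ref{lem:TV-events} with \(\Omega:=R\), \(\mu:=\mu_{K_1}\) and \(\nu:=\mu_{K_2}\). Both resolution distributions are first extended by zero to the finite set \(R=\mathcal K(K_1)\cup\mathcal K(K_2)\). This is the same convention used in Definition~\ref{tvd}, so the total variation distance computed on \(R\) is the given one, and \(d_{TV}(\mu_{K_1},\mu_{K_2})\le\varepsilon\) holds on \(R\).

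Take the event \(A=\{C\in R: g(C)=pg(K_1)\}\). The hypothesis \(\mu_{K_1}(A)>\varepsilon\) then gives, by the lemma,
\[
\mu_{K_2}(A)\ge \mu_{K_1}(A)-\varepsilon>0 .
\]
Since \(\mu_{K_2}(A)=\sum_{C\in A}\mu_{K_2}(C)\) is a finite sum of nonnegative terms, some \(C_0\in A\) satisfies \(\mu_{K_2}(C_0)>0\).

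The extension of \(\mu_{K_2}\) vanishes outside \(\mathcal K(K_2)\), so \(C_0\in\mathcal K(K_2)\). In words, \(C_0\) is a classical knot type that actually occurs among the complete resolutions of \(K_2\). By the definition of \(A\) we have \(g(C_0)=pg(K_1)\). Since \(pg(K_2)\) is a minimum over \(\mathcal K(K_2)\),
\[
pg(K_2)=\min\{g(C):C\in\mathcal K(K_2)\}\le g(C_0)=pg(K_1),
\]
which is the claim.

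No step here is a real obstacle; the argument is a direct application of the lemma. The one point needing care is the bookkeeping of the zero-extension. We must check that a positive value of \(\mu_{K_2}\) forces membership in \(\mathcal K(K_2)\), and that \(A\) may contain types from \(\mathcal K(K_2)\setminus\mathcal K(K_1)\) without harm, since only membership of \(C_0\) in \(\mathcal K(K_2)\) is used. A smaller point is that the lemma uses the supremum characterization of total variation distance. On a finite set this agrees with the half-\(\ell^1\) formula of Definition~\ref{tvd}, which we take as standard.
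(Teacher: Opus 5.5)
Your proof is correct and follows the paper's argument essentially verbatim. Both apply Lemma~\ref{lem:TV-events} to obtain \(\mu_{K_2}(A)\ge\mu_{K_1}(A)-\varepsilon>0\), pick a type \(C_0\in A\) with positive \(\mu_{K_2}\)-mass (hence \(C_0\in\mathcal K(K_2)\)), and conclude \(pg(K_2)\le g(C_0)=pg(K_1)\). Your explicit check of the zero-extension to \(R\), which the paper leaves implicit, is a welcome clarification.
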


\begin{proof}
By Lemma~\ref{lem:TV-events}, we have \(\mu_{K_2}(A) \ge \mu_{K_1}(A)-\varepsilon >0\). Hence there exists a resolved knot type \(C\in A\) occurring with positive \(\mu_{K_2}\)-probability. Such a \(C\) therefore appears among the resolutions of \(K_2\), and satisfies \(g(C)=pg(K_1)\). Since \(pg(K_2)\) is the minimum genus among resolved knot types of \(K_2\), it follows that \(pg(K_2)\le pg(K_1)\).
\end{proof}

This shows that the minimal resolution genus is upper semicontinuous in the following sense: it cannot jump upward under a small perturbation of the resolution distribution, provided that the minimal-genus resolved knot types of \(K_1\) carry probability mass greater than the perturbation size. However, since \(pg(K)\) is defined by a minimum over resolved knot types, it is not continuous in a genuinely probabilistic sense. This contrasts with expectation-type quantities, such as the expected resolution genus \(\mathbb E_g(K)\), which vary continuously with the resolution distribution on any fixed finite support.

\subsection{Probabilistic Goeritz matrices and determinants}

Goeritz matrices provide a checkerboard region formulation of classical knot and link invariants and are well suited to probabilistic variants. Given a checkerboard coloring of a probabilistic pseudo link diagram and a fixed Goeritz sign convention, let \(\eta(c)\in{+1,-1}\) denote the classical incidence sign of a classical crossing \(c\). For a pre-crossing \(c_i\) with probability \(p_i\), let \(c_i^+\) and \(c_i^-\) denote its positive and negative resolutions. We define its probabilistic incidence weight by
\[
\eta_{\mathbb P}(c_i)
=
p_i\,\eta(c_i^+)+(1-p_i)\,\eta(c_i^-).
\]
In particular, when the two resolutions have opposite incidence signs, this becomes
\[
\eta_{\mathbb P}(c_i)
=
(2p_i-1)\eta(c_i^+).
\]
For a classical crossing \(c_i\), we set
\(\eta_{\mathbb P}(c_i)=\eta(c_i)\).

Let the relevant checkerboard regions be indexed by \(0,1,\ldots,m\). One may then define a probabilistic Goeritz matrix \(G_{\mathbb P}\) by replacing each classical incidence number in the usual Goeritz construction by its probabilistic counterpart \(\eta_{\mathbb P}(c_i)\). Equivalently, the entries of \(G_{\mathbb P}\) are obtained by summing the probabilistic incidence weights over crossings adjacent to the corresponding regions, using the same sign convention as in the classical Goeritz matrix.

Deleting one row and one column gives a reduced probabilistic Goeritz matrix \(G_{\mathbb P}^{\mathrm{red}}\). This leads to a probabilistic determinant-type quantity
\[
\det_{\mathbb P}(K)
=
\left|
\det\left(G_{\mathbb P}^{\mathrm{red}}\right)
\right|.
\]

This construction should be interpreted with care. Its value may depend on the checkerboard coloring, the Goeritz sign convention, and the chosen diagrammatic representative unless appropriate invariance results are established. Nevertheless, it provides a natural way to deform classical Goeritz-type data by probabilistic crossing information.

\begin{ex}\rm
Consider the checkerboard-colored diagram of a probabilistic figure-eight knot shown in Figure~\ref{fig8}.

\begin{figure}[ht]
\begin{center}
\includegraphics[width=1.8in]{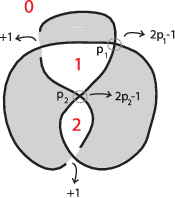}
\end{center}
\caption{A probabilistic figure-eight knot.}
\label{fig8}
\end{figure}

Using a fixed Goeritz sign convention and labeling the white regions by \(0,1,2\), one obtains an unreduced probabilistic Goeritz matrix of the form
\[
G=\left[\begin{matrix}
-1 - 2p_1 & 2p_1       & 1 \\
2p_1     & 1 - 2p_1 - 2p_2 & 2p_2 - 1 \\
1        & 2p_2 - 1   & -2p_2
\end{matrix}\right].
\]
After deleting the prescribed row and column, the reduced matrix produces the determinant-type expression
\[
\det_{\mathbb P}(K)
=
\left|4p_1p_2+2p_2-1\right|.
\]
\end{ex}

These matrix-based constructions illustrate how classical algebraic data may be deformed by probabilistic crossing information. Their main role at this stage is exploratory: they suggest a pathway toward new combinatorial and probabilistic invariants, while leaving their precise invariance properties and dependence on auxiliary choices as important questions for future work.

%%%%%%%%%%%%%%%%%%%%%%%%%%%%%%%%%%%%%%%%%%%%%%%%%%%%%%%%%%%%%%%%%%%%%%%%%%%%%%%%%%%%%%%%%%%%%%%%%%%%%%%%%%%%%%%%%%%%%%%%%%%%%%%%%%%%%%%%%%%%%%%%%%%%%%%%%%%%%%%%%%%%%%%%%%%%%%%%%%%%%%%%%%%%%%%%%%%%%%%%%%%%%%%%%%%%%%%%%%%%%%
%%%%%%%%%%%%%%%%%%%%%%%%%%%

\section{Future Directions}\label{sec:future}

This work introduces a probabilistic framework for knot invariants by assigning probabilities to pre-crossings and studying the resulting probability distributions on classical knot or link types and their invariants. This viewpoint suggests several promising directions for further research, ranging from probabilistic analogues of algebraic invariants to probability distributions on \(3\)-manifolds and extensions of quantum invariants.

\subsection{Further probabilistic matrix-based constructions}

Classical knot invariants such as Seifert matrices, signatures, and Alexander polynomials are derived from combinatorial or algebraic structures associated with Seifert surfaces, checkerboard colorings, or related diagrammatic constructions. In the probabilistic setting, these structures may be extended by considering the distributions of the corresponding classical objects across all complete resolutions. When compatible choices of bases, colorings, or presentations are available, one may further study averaged matrices, expected signatures, determinant distributions, and other derived probabilistic quantities.

\subsubsection*{Probabilistic Seifert matrices and Alexander-type functions}

For each complete classical resolution \(C\in\mathcal C(K)\), one may apply Seifert's algorithm to obtain a Seifert surface \(S(C)\). After choosing a basis for \(H_1(S(C))\), this gives a classical Seifert matrix \(V(C)\).

A first probabilistic object is therefore not necessarily a single matrix, but rather the induced distribution
\[
C\longmapsto V(C)
\]
on Seifert matrices, together with the corresponding probabilities of the resolutions. This avoids the difficulty that different resolutions may produce Seifert surfaces of different genus, and hence Seifert matrices of different sizes.

When the Seifert surfaces arising from the resolutions have compatible first homology groups, and when coherent choices of bases have been fixed, one may further define an averaged Seifert matrix by
\[
V_{\mathbb P}
=
\mathbb E_{C\sim \nu_K}[V(C)]
=
\sum_{C\in\mathcal C(K)}
\nu_K(C)V(C),
\]
where \(\nu_K\) denotes the probability distribution on complete resolutions. Such an averaged matrix depends on the chosen surfaces and bases, and should therefore be regarded as an algebraic statistic associated with the probabilistic diagram rather than as a canonical topological invariant.

By contrast, polynomial invariants obtained from resolved knot types admit a more direct probabilistic averaging. For example, one may define the expected Alexander polynomial by
\[
\Delta_K^{\mathrm{exp}}(x)
=
\sum_{J\in\mathcal K(K)}
\mu_K(J)\Delta_J(x),
\]
where \(\mathcal K(K)\) is the set of classical knot types arising among the resolutions, \(\mu_K\) is the induced resolution distribution on knot types, and \(\Delta_J(x)\) is a chosen normalization of the classical Alexander polynomial of \(J\).

These constructions are algebraic rather than purely topological at the diagrammatic matrix level. They encode how Seifert-type data respond to probabilistic crossing information, but their invariance properties depend on choices of surfaces, bases, and normalizations. A deeper understanding of these dependencies, as well as their computational complexity, remains open for further study.

\subsection{Probabilistic Surgery and Distributions on \(3\)-Manifolds}

Dehn surgery is among the most powerful tools in \(3\)-manifold topology. When applied to a probabilistic pseudo knot, surgery produces not a single \(3\)-manifold, but a probability distribution of \(3\)-manifolds obtained by performing surgery on the classical resolutions.

Fix a surgery coefficient \(r\in\mathbb Q\cup\{\infty\}\). Let \(K\) be a probabilistic pseudo knot with complete-resolution distribution \(\nu_K\). For each complete classical resolution \(C\in\mathcal C(K)\), let
\[
M_r(C)
\]
denote the closed, orientable \(3\)-manifold obtained by \(r\)-surgery on \(C\). This defines a map
\[
M_r:\mathcal C(K)\longrightarrow \mathcal M_3,
\]
where \(\mathcal M_3\) denotes the set of homeomorphism classes of closed, orientable \(3\)-manifolds.

The \emph{probabilistic surgery distribution} associated to \(K\) and \(r\) is the pushforward measure
\[
\mu_{K,r}^{\mathrm{surg}}(M)
=
\sum_{\substack{C\in\mathcal C(K)\\ M_r(C)\cong M}}
\nu_K(C).
\]
Thus the support of the distribution is
\[
\mathcal M_r(K)
=
\{M_r(C): C\in\mathcal C(K)\},
\]
with probabilities obtained by summing over all resolutions producing the same \(3\)-manifold.

This viewpoint aligns naturally with the Lickorish--Wallace theorem: by varying the knot, link, and surgery coefficients, Dehn surgery realizes all closed, orientable \(3\)-manifolds. In the probabilistic setting, unresolved crossing information induces probability distributions on the resulting surgery manifolds. Understanding how resolution uncertainty influences the topology, geometry, and frequency of the resulting manifolds is a rich direction for future work, with potential connections to random \(3\)-manifolds and statistical topology \cite{DunfieldThurston2006}.

\subsection{Quantum invariants under uncertainty}

The probabilistic quantum invariants introduced in Section~\ref{sec:polyinv} provide a first expectation-type approach to quantum-topological information under crossing uncertainty. Several deeper questions remain open beyond this basic construction.

One direction is to study stability more systematically for specific families of invariants, such as the Jones polynomial, the HOMFLYPT polynomial, HOMFLY-PT-type pseudo-link invariants \cite{D1}, and Witten--Reshetikhin--Turaev type invariants, especially under perturbations of the crossing probabilities. Another direction is to compare expectation-type invariants with the full probability distribution of invariant values, as encoded by \(\mathrm{Spec}_I(K)\). Such distributions may retain information lost under averaging.

Further questions concern whether probabilistic quantum invariants admit skein-theoretic or categorical formulations intrinsic to probabilistic pseudo knots, rather than being defined only by averaging over classical resolutions. Such developments could connect probabilistic pseudo knot theory with statistical mechanics, quantum topology, quantum computation, and topological data analysis.

\subsection{Extensions to graphs, bonds, braids, and virtual diagrams}

The probabilistic viewpoint developed in this paper is not limited to pseudo knot diagrams whose only non-classical features are pre-crossings. A natural direction is to extend the framework to diagrams containing several types of local singular or decorated structures simultaneously.

One such extension concerns diagrams with both rigid-vertex nodes and pseudo crossings. Rigid vertices, in the sense of rigid-vertex spatial graph theory, carry a fixed local cyclic ordering of incident edges, while pseudo crossings encode unresolved over--under information. Mixed diagrams containing both types of local data would provide a common setting for studying unresolved crossings together with graph-like singularities.

A related direction, motivated by the theory of bonded knots and links~\cite{DiamantisKauffmanLambropoulou2025}, is to incorporate long bonds and pre-crossings in the same diagram. In such a theory, bonds may connect points on one or more strands, while pre-crossings may occur between ordinary strands, between a strand and a bond, or between bonds. This would lead to probabilistic versions of bonded knots and links, where both the bond structure and the unresolved crossing data contribute to the topology of the object.

Another important direction is the development of braid-theoretic versions of the present framework. One may consider braids with pre-crossings, leading to probabilistic pseudo braids whose closures are probabilistic pseudo links. Such objects naturally raise Markov-type questions and suggest corresponding algebraic descriptions and probabilistic invariants. This braid-theoretic extension is currently being developed by the authors and will be treated in a forthcoming work.

More generally, one may consider embedded graphs whose diagrams contain ordinary crossings, rigid vertices, and pre-crossings. Resolving the pre-crossings then produces a probability distribution on spatial graphs rather than only on knots or links. This suggests probabilistic extensions of graph invariants, rigid-vertex isotopy, and spatial graph polynomials.

The virtual category provides another broad setting for these ideas. Just as one may study pseudo classical knots, one may study pseudo virtual knots by allowing pre-crossings in virtual knot diagrams. One may also allow virtual crossings together with other local nodes, such as rigid vertices or bonded structures. In this setting, the complete resolutions of a pseudo virtual diagram are virtual knots or links, and their behaviour may differ sharply from the classical case. In particular, a virtual pseudo knot need not have the unknot among its possible resolutions; diagrams of Kishino type provide basic examples of virtual phenomena that cannot be detected by simply expecting classical unknot resolutions to
appear.

Finally, the probabilistic framework suggests a computational programme based on knot tables. For a given family of pseudo diagrams, or for classical diagrams in which selected crossings are replaced by pre-crossings, one may compute the resulting distribution of classical knot types across the knot table. This would give knotting probabilities for
tabulated knots and links, allowing one to study how likely different knot types are to arise from unresolved crossing data. Such computations could provide a useful bridge between probabilistic pseudo knot theory, random knot models, and computational knot theory.

\subsection{Potential Applications}\label{sec:applications}

The framework of probabilistic pseudo knots opens up a wide range of potential applications in both theoretical and applied domains.

In molecular biology, these models offer a natural way to describe the topology of DNA, RNA, and proteins \cites{B,WassermanCozzarelli1986,VirnauMirnyKardar2006}, biomolecules whose three-dimensional conformations may include crossings where the over--under information is ambiguous, noisy, or experimentally inaccessible. Imaging and reconstruction techniques, such as cryo-electron microscopy and atomic force microscopy, may produce projected or partially reconstructed data in which some crossing information remains unresolved. Probabilistic invariants provide a principled method for analyzing such uncertainty, enabling the classification of possible folding patterns, the identification of dominant topological features, and the study of molecular dynamics under incomplete information.

In materials science, complex woven, braided, or knotted structures appear in a variety of engineered and natural materials, from textile fabrics and polymer chains \cite{SumnersWhittington1988} to \(3\)-dimensional printed lattices and metamaterials. These structures may be subject to imperfections, deformation, or incomplete observation, resulting in ambiguous or variable topological configurations. The probabilistic invariants introduced here offer new tools for quantifying and comparing such structures despite uncertainty in their exact crossing data.

In computational topology and algorithm design, the probabilistic framework suggests several directions for algorithmic development. Crossing probabilities allow for random sampling of knot resolutions, supporting Monte Carlo simulations, statistical analysis, and robustness testing of topological properties. Such methods may be implemented in computational packages capable of handling probabilistic input and producing probabilistic knot invariants. This could foster new applications across scientific computing, engineering design, and data analysis, particularly in domains where uncertainty is intrinsic.


\begin{thebibliography}{99}

\bibitem{A} {\sc C. C. Adams}, {\em The Knot Book: An Elementary Introduction to the Mathematical Theory of Knots}, 
American Mathematical Society (2004).

\bibitem{Alexander}
{\sc J. W. Alexander},
Topological invariants of knots and links,
{\it Trans. Amer. Math. Soc.} {\bf 30} (1928), 275--306.

\bibitem{BJW} {\sc V. Bardakov, S. Jablan \& H. Wang}, Monoid and group of pseudo braids,  {\em J. Knot Theory and Ramifications}, {\bf 25}, No. 09, 1641002 (2016).

\bibitem{BEHY} {\sc K. Bataineh, M. Elhamdadi, M. Hajij, W. Youmans}, Generating sets of Reidemeister moves of oriented singular links and quandles, {\it J. Knot Theory and Ramifications}, {\bf 27}, No. 14, (2018) 1850064.

\bibitem{B} {\sc D. Buck}, DNA topology, {\em Proceedings of Symposia in Applied Mathematics}, {\bf 66} (2009).

\bibitem{Co} {\sc J. H. Conway}, An enumeration of knots and links, and some of their algebraic properties, {\it Computational Problems in abstract algebra},  (J. Leech,ed.), 329-358. Pergamon Press, 1970.

\bibitem{CrowellFox}
{\sc R. Crowell and R. Fox},
{\em Introduction to Knot Theory},
Springer, 1977.

\bibitem{DiamantisKauffmanLambropoulou2025}
I.~Diamantis, L.~H.~Kauffman, and S.~Lambropoulou,
\newblock Topology and algebra of bonded knots and braids,
\newblock {\em Mathematics} \textbf{13} (2025), no.~20, 3260.


\bibitem{D0} {\sc I. Diamantis}, Algebraic and Geometric Aspects of Non-Classical Knots, arxiv:2607.04445 [math.GT], 2026.

\bibitem{D} {\sc I. Diamantis}, Tied pseudo links \& pseudo knotoids, {\it Mediterr. J. Math.}, {\bf 18}, 201 (2021). https://doi.org/10.1007/s00009-021-01842-1.

\bibitem{D1} {\sc I. Diamantis}, A HOMFLYPT-type invariant for pseudo links via a resolution in Hecke algebras, arXiv:2605.01026 [math.GT], 2026.

\bibitem{DLM3} {\sc I. Diamantis, S. Lambropoulou, S. Mahmoudi}, From annular to toroidal pseudo knots, {\em Symmetry} {\bf 2024}, {\it 16}(10), 1360. https://doi.org/10.3390/sym16101360.

\bibitem{DLM4} {\sc I. Diamantis, S. Lambropoulou, S. Mahmoudi}, From planar to annular to toroidal bracket polynomials for pseudo knots and links, \textit{arXiv:2501.00736 [math.GT]} (2025).

\bibitem{DunfieldThurston2006} {\sc N. M. Dunfield and W. P. Thurston}, Finite covers of random 3-manifolds, \emph{Inventiones Mathematicae} \textbf{166} (2006), 457--521.

\bibitem{EZHLN}
{\sc C. Even-Zohar, J. Hass, N. Linial and T. Nowik},
Invariants of random knots and links,
{\it Discrete Comput. Geom.} {\bf 56} (2016), 274--314.

\bibitem{F} {\sc E. Flapan}, {\em When Topology Meets Chemistry: A Topological Look at Molecular Chirality}, 
Cambridge University Press, Cambridge (2000).

\bibitem{Goeritz}
{\sc L. Goeritz},
Knoten und quadratische Formen,
{\it Math. Z.} {\bf 36} (1933), 647--654.

\bibitem{GL}
{\sc C. McA. Gordon and R. A. Litherland},
On the signature of a link,
{\it Invent. Math.} {\bf 47} (1978), 53--70.

\bibitem{H} {\sc R. Hanaki}, Pseudo diagrams of links, links and spatial graphs, {\it Osaka J. Math.}, {\bf 47}, (2010) 863--883.

\bibitem{HD} {\sc H. A. Dye}, Pseudo knots and an obstruction to cosmetic crossings, {\it J. Knot Theory and Ramifications}, {\bf 26}, No. 04, (2017) 1750022.

\bibitem{HJMR} {\sc A. Henrich, R. Hoberg, S. Jablan, L. Johnson, E. Minten, L. Radovic}, The theory of pseudoknots, {\it J. Knot Theory and Ramifications}, {\bf 22}, No. 07, (2013) 1350032.

\bibitem{HenrichKauffman2017}
A.~Henrich and L.H.~Kauffman,
\newblock Tangle insertion invariants for pseudoknots, singular knots, and rigid vertex spatial graphs,
\newblock {\em Knots, Links, Spatial Graphs, and Algebraic Invariants} (E. Flapan et al., eds.), 
\newblock Contemporary Mathematics, vol. 689, Amer. Math. Soc., 2017, 61--83.

\bibitem{HOMFLY}
{\sc P. Freyd, D. Yetter, J. Hoste, W. B. R. Lickorish, K. Millett and A. Ocneanu},
A new polynomial invariant of knots and links,
{\it Bull. Amer. Math. Soc.} {\bf 12} (1985), 239--246.

\bibitem{Jones}
{\sc V. F. R. Jones},
A polynomial invariant for knots via von Neumann algebras,
{\it Bull. Amer. Math. Soc.} {\bf 12} (1985), 103--111.

\bibitem{Kauffman} {\sc L. H. Kauffman}, State models and the Jones polynomial, \textit{Topology} \textbf{26} (1987), 395--407.

\bibitem{Kauffman1989}
L.~H.~Kauffman,
\newblock Invariants of graphs in three-space,
\newblock {\em Trans. Amer. Math. Soc.} 1989, \textbf{311}, 697--710.

\bibitem{LK} {\sc L. H. Kauffman}, New invariants in the theory of knots, \emph{Amer. Math. Monthly} {\bf 95}, (3) (1988) 195--242.

\bibitem{LK1} {\sc L. H. Kauffman}, Formal Knot Theory, {\it Princeton University Press}, Lecture Notes Series 30 (1983).

\bibitem{KauffmanKnotsPhysics}
L.H.~Kauffman,
\newblock {\em Knots and Physics},
\newblock Series on Knots and Everything, vol.~1,
\newblock World Scientific, Singapore, 1991.

\bibitem{LK2} {\sc L. H. Kauffman}, Introduction to virtual knot theory, {\it J. Knot Theory and Ramifications}, {\bf 21}, No. 13, 1240007 (2012).

\bibitem{Lickorish}
{\sc W. B. R. Lickorish},
A representation of orientable combinatorial 3-manifolds,
{\it Ann. Math.} {\bf 76} (1962), 531--540.

\bibitem{Mu} {\sc H. Murakami}, Some metrics on classical knots, {\em Mathematische Annalen}, {\bf 270} (1985), 35--45.


\bibitem{NOS} {\sc S. Nelson, N. Oyamaguchi, R. Sandanovic}, Psyquandles, Singular Knots and Pseudoknots, {\it Tokyo J. Math.}, {\bf 42}, No. 2, (2019).


\bibitem{OhtsukiQuantumInvariants}
T.~Ohtsuki,
\newblock {\em Quantum Invariants: A Study of Knots, 3-Manifolds, and Their Sets},
\newblock Series on Knots and Everything, vol.~29,
\newblock World Scientific, Singapore, 2002.

\bibitem{RT}
{\sc N. Reshetikhin and V. Turaev},
Ribbon graphs and their invariants derived from quantum groups,
{\it Comm. Math. Phys.} {\bf 127} (1990), 1--26.



\bibitem{Rolfsen}
{\sc D. Rolfsen},
{\em Knots and Links},
AMS Chelsea Publishing, 2003.


\bibitem{Ruberman}
{\sc D. Ruberman},
Mutation and volumes of knots in $S^3$,
{\it Invent. Math.} {\bf 90} (1987), 189--215.


\bibitem{Seifert}
{\sc H. Seifert},
Uber das Geschlecht von Knoten,
{\it Math. Ann.} {\bf 110} (1935), 571--592.

\bibitem{SumnersWhittington1988} {\sc D. W. Sumners and S. G. Whittington}, Knots in self-avoiding walks, \emph{Journal of Physics A: Mathematical and General} \textbf{21} (1988), 1689--1694.

\bibitem{Villani2009} {\sc C. Villani}, Optimal Transport: Old and New, {\it Grundlehren der mathematischen Wissenschaften}, Vol. 338, Springer, Berlin, 2009.

\bibitem{VirnauMirnyKardar2006} {\sc P. Virnau, L. A. Mirny and M. Kardar}, Intricate knots in proteins: Function and evolution, \emph{PLOS Computational Biology} \textbf{2}(9) (2006), e122.

\bibitem{Wallace}
{\sc A. H. Wallace},
Modifications and cobounding manifolds,
{\it Canad. J. Math.} {\bf 12} (1960), 503--528.

\bibitem{WassermanCozzarelli1986} {\sc  S. A. Wasserman and N. R. Cozzarelli}, Biochemical topology: applications to DNA recombination and replication, \emph{Science} \textbf{232} (1986), 951--960.

\bibitem{WRT}
{\sc E. Witten},
Quantum field theory and the Jones polynomial,
{\it Comm. Math. Phys.} {\bf 121} (1989), 351--399.


\end{thebibliography}
\end{document}